





\documentclass[onecolumn]{autart}    


\usepackage{graphicx}           
\usepackage{dcolumn}            
\usepackage{bm}                 

\usepackage{graphics}           
\usepackage{epsfig}             
\usepackage{times}              
\usepackage[fleqn]{amsmath}
\usepackage{mathrsfs}
\usepackage{amssymb}
\usepackage{extarrows}

\usepackage{ntheorem} 

\usepackage{amsfonts}
\usepackage{fancyhdr}
\usepackage{color}
\usepackage{subfigure}
\usepackage{enumerate}
\usepackage{url}

\allowdisplaybreaks[4] 
\newtheorem{theorem}{Theorem}[section] 
\newtheorem{lemma}{Lemma}[section]

\newtheorem{proposition}{Proposition}[section]
\newtheorem{definition}{Definition}
\newtheorem{remark}{Remark}

\newtheorem*{proof}{Proof}

\newcommand{\diff}[1]{\text{d}{#1}}

\hyphenation{op-tical net-works semi-conduc-tor}

\begin{document}

\begin{frontmatter}

\title{Relative Stability in the Sup-norm and Input-to-state Stability in the Spatial Sup-norm for Parabolic PDEs}

\author{Jun Zheng$^{1,2}$}\ead{zhengjun2014@aliyun.com},
\author{Guchuan Zhu$^{2}$}\ead{guchuan.zhu@polymtl.ca},
\author{Sergey Dashkovskiy$^{3}$}\ead{sergey.dashkovskiy@mathematik.uni-wuerzburg.de}

\address{$^{1}${School of Mathematics, Southwest Jiaotong University\\
        Chengdu, Sichuan, P. R. of China 611756}\\
        $^{2}$Department of Electrical Engineering, Polytechnique
        Montr\'{e}al\\ P.O. Box 6079, Station Centre-Ville,
        Montreal, QC, Canada H3T 1J4 \\
        $^{3}${Institute of Mathematics, University of W\"{u}rzburg\\
        Emil-Fischer-Str. 40, W\"{u}rzburg,Germany 97074}}

\begin{keyword}                            
Nonlinear PDEs, relative stability, input-to-state stability, De~Giorgi iteration, cascade of PDE systems. \\\\
\end{keyword}

\begin{abstract}
In this paper, we introduce the notion of \emph{relative $\mathcal{K}$-equi-stability} (RKES) to characterize the uniformly continuous dependence of (weak) solutions on external disturbances for nonlinear parabolic PDE systems. Based on the RKES, we prove the input-to-state stability (ISS) in the spatial sup-norm for a class of nonlinear parabolic PDEs with either Dirichlet or Robin boundary disturbances. Two examples, concerned respectively with a super-linear parabolic PDE with Robin boundary condition and a $1$-D parabolic PDE with a destabilizing term, are provided to illustrate the obtained ISS results. Besides, as an application of the notion of RKES, we conduct stability analysis for a class of parabolic PDEs in cascade coupled over the domain or on the boundary of the domain, in the spatial and time sup-norm, and in the spatial sup-norm, respectively. The technique of De Giorgi iteration is extensively used in the proof of the results presented in this paper.
\end{abstract}

\end{frontmatter}
%
\section{Introduction}

{O}{riginally} introduced by  Sontag in {the late} 1980s, the notion {of} \emph{input-to-state stability} (ISS) has been proven to be a convenient tool for describing robust stability of finite dimensional systems with external inputs. The pioneering work on extending the application of ISS to infinite dimensional systems is owe to  \cite{Dashkovskiy:2013} and \cite{Dashkovskiy:2013b}, where different methods were proposed for constructing ISS-Lyapunov functions for abstract equations in Banach spaces, and impulsive systems, respectively. Particularly, as an application of the proposed methods, ISS-Lyapunov functions and ISS estimates were presented for  some nonlinear single and interconnected parabolic PDEs in \cite{Dashkovskiy:2013}. Since then,  the ISS of PDE systems has drawn much attention in the literature of PDE control. {It is} worth noting that applying the classical regularity theory of PDEs to ISS analysis of PDEs having only in-domain disturbances seems to be straightforward, while it is a challenge to establish {the} ISS for PDEs that have external disturbances distributed  on the boundary of the domain.

In recent years, a great effort has been devoted to establishing {the} ISS for PDEs with boundary disturbances; see  \cite{Karafyllis:2018iss,Mironchenko:2019b} for comprehensive surveys on this topic, and \cite{Zheng:2020SCL,Zheng:2020b} for a summary of different approaches for establishing  ISS of PDEs with boundary disturbances. Among the existing literature, the ISS in $L^{1}$-norm and $L^{q }$-norm with ${q}\in [2,+\infty)$ has been well studied for PDEs with boundary disturbances via different methods (see, e.g., \cite{Zheng:2020b}, and  \cite{Jacob:2019,Jacob:2018_SIAM,Karafyllis:2016a,Mironchenko:2019,Mironchenko:2018,Schwenninger:2019,Zheng:201702,Zheng:201804,Zheng:2020c}, etc.), while few results {are concerned with} ISS in $L^{\infty}$-norm except \cite{Karafyllis2017siam}, where {the} ISS in various norms, including weighted $L^{\infty}$-norm, was considered for linear $1$-D PDEs governed by Sturm-Liouville operators by exploiting the eigenfunction expansion and the finite difference scheme; \cite{Karafyllis:2020}, where ISS-style estimates in the spatial sup-norm was established for classical solutions of nonlinear $1$-D parabolic PDEs by using an ISS Lyapunov functional for the sup-norm; and \cite{Zheng:201702}, where under an appropriate boundary feedback law and with compatibility conditions, an ISS estimate in $L^\infty$-norm was established for a $1$-D linear parabolic equation with  {a destabilizing} term.

The aim of this paper is to provide a new {method} for establishing the ISS in the spatial sup-norm for (weak) solutions of a class of higher dimensional nonlinear parabolic PDEs with boundary disturbances, which is different from {those developed} in \cite{Karafyllis2017siam,Karafyllis:2020}, and \cite{Zheng:201702} concerning classical solutions of $1$-D PDEs. More precisely, in order to establish  ISS estimates in the spatial sup-norm, we borrow first the notion {of} \emph{relative stability} (RS) from \cite{Kloeden:1975}, which was used to characterize a kind of relationship of stabilities for two control systems, to describe the uniformly continuous dependence on the external disturbances for weak solutions of nonlinear PDE systems. Then, based on {the} RS in the (spatial and time) sup-norm, we establish {the} ISS in the spatial sup-norm for the considered higher dimensional nonlinear PDEs with Dirichlet and Robin boundary disturbances. {Moreover}, we show how to apply {the property of} RS to {characterize the} stability in the sup-norm, and the spatial sup-norm, respectively, for a class of PDE systems in cascade coupled via the boundary or over the domain.

The main tool exploited in this paper for the proof of various stability {properties is the} De~Giorgi iteration, which {has been} used for the first time to establish ISS estimates for classical solutions of PDEs in \cite{Zheng:201702}. It should be mentioned that in \cite{Zheng:201702} {the} De~Giorgi iteration is used for addressing {the} ISS of $1$-D parabolic PDEs with Dirichlet boundary disturbances by combining the technique of splitting, {which requires certain} compatibility conditions. While in this paper, {the} De~Giorgi iteration is used for not only $1$-D PDEs with Dirichlet boundary conditions, but also {for} higher dimensional PDEs with {either Dirichlet or Robin boundary conditions}. In addition, {unlike} \cite{Zheng:201702}, we {consider in this paper solutions in a weak sense. Therefore, we do not use any splitting technique and impose any compatibility condition, which is an improvement of the results obtained in \cite{Zheng:201702}}.

In {summary}, the main contribution of this paper includes:
\begin{enumerate}[(i)]
\item introducing RKES to describe the uniformly continuous dependence of weak solutions on the external disturbances and establishing RKES estimates for a class of higher dimensional nonlinear parabolic PDEs;

\item establishing {the} ISS in the spatial sup-norm for weak solutions of higher dimensional PDEs with Dirichlet or Robin boundary disturbances by using the property of RKES;


\item  establishing stability estimates in the spatial and time sup-norm and the spatial sup-norm, respectively for a class of {parabolic systems in cascade}, which are interconnected or coupled via the boundary of the domain;

\item extending the usage of De~Giorgi iteration to ISS analysis of higher dimensional PDEs with Robin boundary conditions.
\end{enumerate}


In the rest of the paper, {we introduce first some basic} notations. Section~\ref{Sec. II} presents the problem formulation,  well-posedness, notions on {relative} stability, and the main results on RKES in the sup-norm and ISS in the spatial sup-norm for the considered PDE systems. Section~\ref{Sec. examples} provides two examples to illustrate the obtained ISS results. As an application of REKS presented in Section~\ref{Sec. II}, we show {in Section~\ref{Sec. cascade systems}} how to apply REKS to {obtain} stability estimates in the spatial and time sup-norm and the spatial sup-norm, respectively, for a class of {parabolic systems in cascade} connected over the domain or on the boundary of the domain. Some concluding remarks are given in Section~\ref{Sec. remarks}.

\textbf{Notations.} $\mathbb{R}_+$ denotes the set of positive real numbers and $\mathbb{R}_{\geq 0} := {\{0\}}\cup\mathbb{R}_+$. $\Omega$ denotes a bounded domain in $\mathbb{R}^n(n\geq 1)$ of class $C^2$, that is, $\overline{\Omega}$ is an $n$-dimensional $C^2$-submanifold of $\mathbb{R}^n$ with boundary $\partial \Omega$.  $|\Omega|$ denotes the $n$-dimensional Lebesgue measure of $\Omega$. For any $T>0$, {$Q_T:=\Omega\times (0,T)$}, $\partial_lQ_T:= \partial \Omega \times (0,T)$ {and} $\partial_pQ_T:=\partial \Omega \times (0,T) \cup \{(x,t)|x\in\overline{\Omega},t=0\}$.

$\mathcal {K}:=\{\gamma : \mathbb{R}_{\geq 0} \rightarrow \mathbb{R}_{\geq 0}|\ \gamma(0)=0,\gamma$ is continuous, strictly increasing$\}$,
$ \mathcal {L}:=\{\gamma : \mathbb{R}_{\geq 0}\rightarrow \mathbb{R}_{\geq 0}|\ \gamma$ is continuous, strictly decreasing, $\lim_{s\rightarrow\infty}\gamma(s)=0\}$,
$ \mathcal {K}\mathcal {L}:=\{\beta : \mathbb{R}_{\geq 0}\times \mathbb{R}_{\geq 0}\rightarrow \mathbb{R}_{\geq 0}|\ {\beta (\cdot,t)}\in \mathcal {K}, \forall t \in \mathbb{R}_{\geq 0}$, and $\beta(s,\cdot)\in \mathcal {L}, \forall s \in {\mathbb{R}_{+}}\}$.

Throughout this paper, all   notations on function spaces are standard, which can be found in, e.g., \cite{Evans:2010,Wu2006}.
 Let $ \mathbb{A}:=C^1(\overline{\Omega};\mathbb{R}_+)$, $\mathbb{C}:= C(\overline{\Omega};\mathbb{R}_{\geq 0})$, $\mathbb{M}:= C( \partial{\Omega};\mathbb{R}_+)$,
 $\mathbb{H}:= C^{0,1}((\overline{\Omega}\times \mathbb{R}_{\geq 0})\times \mathbb{R};\mathbb{R})$,
 $\mathbb{F}:= C(\overline{\Omega}\times \mathbb{R}_{\geq 0};\mathbb{R})$, $\mathbb{D}:=C(\partial{\Omega}\times \mathbb{R}_{\geq 0};\mathbb{R})$, {and} $\mathbb{U}:=W^{1,p}(\Omega)$, where $p\geq 2$ and  $p>n$.
For functions $a\in\mathbb{A},c\in\mathbb{C},m\in\mathbb{M}, $ we always denote
\begin{align}\label{underline}
\underline{a}:=\min_{x\in \overline{\Omega}}a>0, \underline{c}:=\min_{x\in \overline{\Omega}}c\geq0,\underline{m}:=\min_{x\in \partial{\Omega}}m>0.
\end{align}
\section{Problem setting and main results}\label{Sec. II}
\subsection{Problem formulation and well-posedness}
For  functions
 $a\in  \mathbb{A},c\in \mathbb{C},m \in  \mathbb{M},h\in \mathbb{H},f\in \mathbb{F},d\in \mathbb{D},u^0 \in \mathbb{U}$,
 we consider the stability of the following higher dimensional nonlinear parabolic system:
\begin{subequations}\label{PDE}
\begin{align}
\mathscr{L}[u]+h(x,t,u)
 =&f~~~~~~~~\text{in}\ \Omega\times \mathbb{R}_+,\\
\mathscr{B}[u]=&d~~~~~~~~\text{on}\ \partial \Omega \times \mathbb{R}_{+},\\
u(\cdot,0)=&u^0(\cdot)~~\text{in}\   \Omega,
\end{align}
\end{subequations}
where $\mathscr{L}[u]:=u_t-\text{div}  \ (a\nabla u) +cu$, and
  \begin{align}\label{Robin}
{\mathscr{B}[u]}:=a\frac{\partial u}{\partial\bm{\nu}}+mu,
\end{align}
or \begin{align}
\mathscr{B}[u]:=u,\label{Dirichlet}
\end{align}
represents the Robin boundary condition, or the Dirichlet boundary condition, respectively.

We always assume that for any $T>0$, there exist a positive constant $c_0$, an increasing function $H:\mathbb{R}_{\geq 0}\rightarrow\mathbb{R}_{\geq 0} $, and a function $\Psi \in C( \mathbb{R}_{\geq 0};\mathbb{R}_{\geq 0})$ satisfying $\Psi(0)=0$, such that
  \begin{subequations}\label{4}
\begin{align}
&|h(x,t,\xi)|\leq c_0(1+|\xi|^{\lambda}),
 |\partial_\xi h(x,t,\xi)|\leq H(|\xi|), \label{4a}\\
&|h(x,s,\xi)-h(x,t,\xi)-f(x,s)+f(x,t)|
\leq  H(|\xi|)\Psi(|s-t|),
\end{align}
\end{subequations}
holds for all $x\in \overline{\Omega},s,t\in[0,T],\xi\in\mathbb{R},$ where $\lambda \in [1,1+\frac{2}{n}]$ is a constant.

We provide  a definition of a (weak) solution of {the} system~\eqref{PDE}.
\begin{definition}\label{weak solution}
\begin{enumerate}
\item[(i)] We say that $u$ is a weak solution of {the} system~\eqref{PDE} with the Robin boundary condition \eqref{Robin}, if for any $T>0$:
 \begin{align*}
 u\in C([0,T];W^{1,p}(\Omega)),
   u(\cdot,0)=u^0(\cdot)\ \text{in}\  \Omega,
\end{align*}
 and the equality
 \begin{align*}
&-\int_{0}^T\int_{\Omega}u\eta_t\diff{x}\diff{t}
 +\int_{0}^T\int_{\Omega} a \nabla u\nabla \eta\diff{x}\diff{t}+\int_{0}^T\int_{\Omega} (cu+h(x,t,u))\eta\diff{x}\diff{t}\notag\\
       =&\int_{0}^T\int_{\Omega} f \eta\diff{x}\diff{t}+\int_{0}^{T}\int_{\partial \Omega}(d-mu)\eta\diff{x}\diff{t}
    +\int_{\Omega} u^0(x)\eta(x,0)\diff{x}
\end{align*}
 holds true for  any  
 $\eta\in  C([0,T];(W^{1,p}(\Omega))')\cap C^1((0,T); L^{p'}(\Omega))$ with $\eta(\cdot,T)=0$ in $\Omega$, where  $ p'=\frac{p}{p-1}$, and  $(W^{1,p}(\Omega))'$ is the dual space
of $W^{1,p}(\Omega)$.
\item[(ii)] We say that $u$ is a weak solution of {the} system~\eqref{PDE} with the Dirichlet boundary condition \eqref{Dirichlet}, if for any $T>0$:
\begin{align*}
 &u\in C([0,T];W^{1,p}(\Omega)),u=d~\text{on}\  \partial_lQ_T,~u(\cdot,0)=u^0(\cdot)\ \text{in}\  \Omega,
\end{align*}
and the equality
\begin{align*}
 -\int_{0}^T\int_{\Omega}u\eta_t\diff{x}\diff{t}
 +\int_{0}^T\int_{\Omega} a \nabla u\nabla \eta\diff{x}\diff{t}+\int_{0}^T\int_{\Omega} (cu+h(x,t,u))\eta\diff{x}\diff{t}
=\int_{0}^T\int_{\Omega} f \eta\diff{x}\diff{t}
  +\int_{\Omega} u^0(x)\eta(x,0)\diff{x}
\end{align*}
holds true for  any  
 $\eta\in  C([0,T];(W^{1,p}_0(\Omega))')\cap C^1((0,T); L^{p'}(\Omega))$ with $\eta(\cdot,T)=0$ in $\Omega$, where $ p'=\frac{p}{p-1}$, and $(W^{1,p}_0(\Omega))'$ is the dual space
of $W^{1,p}_0(\Omega)$.
\end{enumerate}
\end{definition}

For the well-posedness {of the considered problem}, we have the following result.
\begin{proposition}\label{existence}
 System \eqref{PDE} with either the Robin boundary condition \eqref{Robin}, or the Dirichlet boundary condition \eqref{Dirichlet},   admits a unique weak solution belonging to $ C([0,T];W^{1,p}(\Omega))\cap C^1((0,T);L^{p'}(\Omega))$ for any $T>0$, where $ p'=\frac{p}{p-1}$.
\end{proposition}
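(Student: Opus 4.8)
\emph{Proof strategy.} The plan is to reduce to a homogeneous boundary condition, solve the resulting semilinear problem locally in time by a contraction mapping built on linear parabolic theory, globalize it through a De~Giorgi a priori estimate, and treat uniqueness separately by a Gronwall argument. For the Dirichlet problem~\eqref{Dirichlet} I would first fix, on each $Q_T$, a lifting $w$ with $w=d$ on $\partial_lQ_T$ and enough parabolic regularity that $\mathscr{L}[w]\in L^{p'}(Q_T)$ and $w\in C([0,T];W^{1,p}(\Omega))$, and substitute $v:=u-w$; then $v$ solves a problem of the same form with homogeneous Dirichlet condition, right-hand side $f-\mathscr{L}[w]$, and nonlinearity $\widetilde{h}(x,t,v):=h(x,t,v+w)$, which still satisfies the bounds in~\eqref{4} because $p>n$ makes $w$ bounded and continuous on $\overline{Q_T}$. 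In the Robin case~\eqref{Robin} no substitution is needed, since $d$ enters the weak formulation of Definition~\ref{weak solution}(i) only through the boundary forcing $\int_{\partial\Omega}(d-mu)\eta$. Hence it suffices to construct the weak solution of the homogeneous-boundary problem and, for the Dirichlet case, add $w$ back.

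Next I would invoke the linear theory: the operator $A:=-\text{div}(a\nabla\,\cdot\,)+c$ with the homogeneous Robin or Dirichlet boundary condition is sectorial on $L^{q}(\Omega)$ for every $q\in(1,\infty)$, generates an analytic semigroup, and enjoys maximal $L^{q}$-regularity on the $C^{2}$ domain $\Omega$. On a short interval $[0,\tau]$, define the map $\mathcal{T}$ sending $v$ in the closed ball of radius $R$ in $C([0,\tau];W^{1,p}(\Omega))$ to the solution of the linear problem with forcing $f-h(\cdot,\cdot,v)$ and, in the Robin case, boundary datum $d$. Since $p>n$ gives the embedding $W^{1,p}(\Omega)\hookrightarrow C(\overline{\Omega})$, of norm $C_{p}$ say, every such $v$ is bounded by $C_{p}R$ on $\overline{Q_\tau}$, so the mean value theorem and $|\partial_\xi h|\le H(|\xi|)$ give
\[
\|h(\cdot,\cdot,v_{1})-h(\cdot,\cdot,v_{2})\|_{L^{\infty}(Q_\tau)}\le H\!\big(C_{p}R\big)\,\|v_{1}-v_{2}\|_{C([0,\tau];W^{1,p}(\Omega))}.
\]
Combined with the time-integrable smoothing estimates of the semigroup, $\mathcal{T}$ maps the ball into itself and is a contraction for $\tau$ small, and its fixed point is a local weak solution; the regularity $u\in C^{1}((0,\tau);L^{p'}(\Omega))$ then follows from analytic-semigroup smoothing applied to the forcing $f-h(\cdot,\cdot,u)$, whose modulus of continuity in $t$ in $L^{p'}(\Omega)$ is exactly controlled by the second inequality in~\eqref{4} through $\Psi$.

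The main obstacle is the passage from local to global existence, i.e.\ ruling out finite-time blow-up of $\|u(\cdot,t)\|_{W^{1,p}(\Omega)}$. For this I would run the De~Giorgi iteration: testing the equation against the truncations $(u-k)_{+}$ and $(-u-k)_{+}$ at increasing levels $k>0$ and using $a\ge\underline{a}>0$, the parabolic Sobolev/Gagliardo--Nirenberg inequalities, and the growth bound $|h(x,t,\xi)|\le c_{0}(1+|\xi|^{\lambda})$ with the exponent restriction $\lambda\le 1+\tfrac{2}{n}$ to absorb the nonlinear contribution at high levels, one obtains a self-improving recursion for the level-set energies, hence an a priori bound on $\|u\|_{L^{\infty}(Q_T)}$ in terms of the data on $[0,T]$; the restriction $\lambda\le 1+\tfrac{2}{n}$ is precisely what makes this iteration close. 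Once $u\in L^{\infty}(Q_T)$, the term $h(\cdot,\cdot,u)$ lies in $L^{\infty}(Q_T)$ and maximal $L^{q}$-regularity bootstraps $u$ to $C([0,T];W^{1,p}(\Omega))\cap C^{1}((0,T);L^{p'}(\Omega))$, so the local solution extends over all of $[0,T]$. Finally, uniqueness follows by subtracting two solutions $u_{1},u_{2}$: their difference solves a linear parabolic problem with zero-order coefficient $\int_{0}^{1}\partial_\xi h(x,t,\theta u_{1}+(1-\theta)u_{2})\,\diff{\theta}$, which is bounded thanks to the $L^{\infty}$ bounds on $u_{1},u_{2}$ together with $|\partial_\xi h|\le H$, so a standard energy estimate and Gronwall's inequality give $u_{1}\equiv u_{2}$.
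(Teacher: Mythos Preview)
The paper's proof is much shorter and different in kind: it quotes Theorem~14.5 of \cite{Amann:1988} to obtain a unique maximal weak solution on some $[0,T_0]$, and then Theorem~15.2(i) of \cite{Amann:1988}, which says that under the growth hypothesis \eqref{4a} with $\lambda\le 1+\tfrac{2}{n}$ an a~priori bound $\|u(\cdot,t)\|_{L^1(\Omega)}\le C$ on $(0,T_0)$ already forces $T_0=T$. The $L^1$ bound is then imported from \cite[Theorem~3.1]{Zheng:2020b} in the Robin case and from Theorem~\ref{Theorem ISS}(ii) of this paper in the Dirichlet case. No lifting of $d$, no contraction map, and no De~Giorgi $L^\infty$ step are carried out. (As an aside, your Dirichlet lifting $w\in C([0,T];W^{1,p}(\Omega))$ with trace $d\in\mathbb{D}=C(\partial\Omega\times\mathbb{R}_{\ge 0})$ need not exist, since the trace space of $W^{1,p}(\Omega)$ is $W^{1-1/p,p}(\partial\Omega)$, not $C(\partial\Omega)$; Amann's framework handles the inhomogeneous boundary datum directly and sidesteps this.)

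Your globalization step has a genuine gap. The claim that a De~Giorgi iteration yields an $L^\infty$ bound on $u$ in terms of the data, using only $|h(x,t,\xi)|\le c_0(1+|\xi|^\lambda)$, is false: when you test with $(u-k)_+$ the term $\int h(x,t,u)(u-k)_+$ has no sign and must go to the right, where it contributes $c_0\int|u|^\lambda(u-k)_+$, a superlinear quantity that cannot be absorbed by the energy on the left; and indeed $u_t=\Delta u+|u|^{\lambda-1}u$ blows up in finite time for large data even when $\lambda\le 1+\tfrac{2}{n}$. The De~Giorgi computations that \emph{do} appear later in the paper (proof of Theorem~\ref{Theorem RKES-Robin}) use the monotonicity hypothesis \eqref{increasing property} precisely to discard the $h$-contribution, see \eqref{9-2}, and \eqref{increasing property} is not among the hypotheses of Proposition~\ref{existence}. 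The exponent $1+\tfrac{2}{n}$ is not what closes a level-set recursion; it is the threshold at which Amann's semigroup bootstrap upgrades an $L^1$ bound to higher regularity, which is why the paper only needs \eqref{L1}.
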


\begin{proof}
For any $T>0$, by Theorem 14.5 of \cite{Amann:1988}, {the} system~\eqref{PDE} with the Robin boundary condition \eqref{Robin}, or the Dirichlet boundary condition \eqref{Dirichlet},   admits a unique maximal weak solution $u\in C([0,T_0];W^{1,p}(\Omega))\cap C^1((0,T_0);L^{p'}(\Omega))$ with some $T_0\in (0,T]$.
Furthermore, according to Theorem 15.2(i) of \cite{Amann:1988},
if there exists a positive constant $C$ such that the following \emph{a priori} estimate holds true:
 \begin{align}\label{L1}
\|u(\cdot,t)\|_{L^1(\Omega)}\leq C,\forall t\in (0,T_0),
\end{align}
then, such a maximal solution $u$ must exist globally on $[0,T]$. Therefore, it suffices to prove {that} \eqref{L1} holds true for the maximal solution of {the} system~\eqref{PDE} with the Robin boundary condition \eqref{Robin}, or the Dirichlet boundary condition \eqref{Dirichlet},   respectively.

Indeed, for system~\eqref{PDE} with the Robin boundary condition \eqref{Robin}, the estimate in \eqref{L1} is  guaranteed by \cite[Theorem 3.1]{Zheng:2020b}. For system~\eqref{PDE} with the Dirichlet boundary condition \eqref{Dirichlet}, the estimate in \eqref{L1} is guaranteed by applying the estimate given in Theorem~\ref{Theorem ISS}(ii) to \eqref{PDE} defined over $Q_{T_0}$.
$\hfill\blacksquare$\end{proof}

\begin{remark}
The growth conditions on the nonlinear term $h$ {appearing} in \eqref{4} are only used  for guaranteeing the existence and uniqueness of a weak solution. In particular, as indicated in \cite{Amann:1988}, the exponent $1+\frac{2}{n}$ in \eqref{4a} is optimal for the existence of a global weak solution. However, if a certain compatibility condition is imposed, and a smooth solution is considered,  then the growth conditions on the nonlinear term $h$ in \eqref{4} can be relaxed; see \cite[Proposition 2.1]{Zheng:2020b}.
\end{remark}

\subsection{Notion on {relative} stability}
From the point of view on PDEs, both the initial value and enforced terms (external disturbances) have a deep effect  on the stability (and well-posedness) of PDE  systems. In order to describe the influence {induced} by these data,  we define some stability {characteristics} for PDE systems. More precisely, based on the notion {of} \emph{relative stability} (RS) given by \cite{Kloeden:1975}, which is an extension of the concept for finite dimensional systems, e.g. \cite{Lakshmikantham:1962}, to general control systems, we define several {properties of} relative stability for the considered PDE systems.

 {To emphasize the dependence of the solutions on the initial value and external disturbances}, we denote by $  \Sigma (\mathbb{U},\mathbb{F},\mathbb{D})$ the system~\eqref{PDE} with data $(u^0,f,d)\in \mathbb{U}\times \mathbb{F}\times\mathbb{D}$. Note that if $v \in W^{1,p}(\Omega)~\text{with}~p>n$, then $v\in C^{1-\frac{n}{p}}(\overline{\Omega})$; see, e.g. \cite[Theorem 1.3.2]{Wu2006}. Thus, $\sup_{x\in \Omega}|v(x)|<+\infty$ and hence, $\sup_{x\in \Omega}|v(x)|$ is well-defined.

\begin{definition}\label{UC}
 {The  system $\Sigma (\mathbb{U},\mathbb{F},\mathbb{D})$  is said to be} relatively equi-stable  (RES) in the sup-norm with respect to (w.r.t.) in-domain and boundary disturbances in $\mathbb{F}\times \mathbb{D}$,
if for every constant $\varepsilon>0$, there exists a positive constant $\delta$ depending only on $\varepsilon$, such that  the following implication
\begin{align*}
\sup_{(x,t)\in Q_T}|f_1(x,t)-f_2(x,t)|+ \sup_{(x,t)\in\partial_lQ_T}|d_1(x,t)-d_2(x,t)|  <\delta
     \Rightarrow
     \sup_{(x,t)\in Q_T}|u_1(x,t)-u_2(x,t)|<\varepsilon
\end{align*}
holds true for all $ (f_1,d_1),(f_2,d_2)\in \mathbb{F}\times \mathbb{D}$  and all $T>0$, where $u_i$ is the solution of {the} system $\Sigma (\mathbb{U},\mathbb{F},\mathbb{D})$ corresponding to the data $ (u^0,f_i,d_i)\in \mathbb{U}\times\mathbb{F}\times\mathbb{D}, i=1,2 $.
\end{definition}
\begin{definition}\label{UKC}
 {The  system $\Sigma (\mathbb{U},\mathbb{F},\mathbb{D})$  is said to be} relatively  $\mathcal{K}$-equi-stable (RKES) in the sup-norm w.r.t. in-domain and boundary disturbances in $\mathbb{F}\times \mathbb{D}$, if there exist functions $\gamma_d,\gamma_f\in \mathcal {K}$ such that
\begin{align*}
      \sup_{(x,t)\in Q_T}|u_1(x,t)-u_2(x,t)|
     \leq \gamma_f \left(\sup_{(x,t)\in Q_T}|f_1(x,t)-f_2(x,t)|\right)
      +\gamma_d \left(\sup_{(x,t)\in\partial_lQ_T}|d_1(x,t)-d_2(x,t)|\right)
      ,\forall T>0,
\end{align*}
where $u_i$ is the solution of {the} system $\Sigma (\mathbb{U},\mathbb{F},\mathbb{D})$  corresponding to the data $ (u^0,f_i,d_i)\in \mathbb{U}\times\mathbb{F}\times\mathbb{D}, i=1,2$.

Particularly,  {the  system $\Sigma (\mathbb{U},\mathbb{F},\mathbb{D})$  is said to be} relatively  Lipschitz-equi-stable (RLES)  in the  sup-norm w.r.t. in-domain and boundary disturbances in $\mathbb{F}\times \mathbb{D}$, if $\gamma_f(s)=L_fs,\gamma_d(s)=L_ds$ for any $s\geq0$, where $L_f$ and $L_d$ are certain positive constants.
\end{definition}
\begin{remark} We provide some comments on RES.
\begin{enumerate}[(i)]
\item It should be noticed that there is a slight difference for RS between the definition given in \cite{Kloeden:1975} and the one given in this paper. Indeed, RES defined in \cite{Kloeden:1975} is mainly used to describe the relationship of stabilities between two  systems, namely it is in term of multiple different systems, while  RES defined in this paper is mainly used to {characterize} the uniformly continuous dependence of the solution on {the} external disturbances for a certain PDE, namely it is in term of one system (i.e., the {nominal dynamics} of the PDEs are governed by the same differential operator) with different external inputs.
\item Note that a system with external disturbances may {be RES w.r.t. external disturbances while not being} asymptotically stable. For example, we consider the following systems:
\begin{align*}
&(u_k)_t-(u_k)_{xx}=f_k(x,t),~~~~~~~~~~(x,t)\in \left(0,\frac{\pi}{2}\right) \times  \mathbb{R}_{\geq 0},\\
&u_k(0,t)= 0,
u_k\left(\frac{\pi}{2},t\right)=d_k(t),~  t\in \mathbb{R}_{+},\\
&u_k(x,0)=u^0(x),~~~~~~~~~~~~~~~~~~~~~~~~x\in \left(0,\frac{\pi}{2}\right),
      \end{align*}
         where $u^0(x):=0$,  $f_k(x,t):=\sqrt{2}k\sin x\cos\left(t-\frac{\pi}{4}\right)$, $d_k(t):=k\sin t $, and $k\in\mathbb{R}_{+}$.
{It is} clear that $u_k(x,t)=k\sin t \sin x$ is the unique solution, which is bounded  for any fixed $k$. Since there exists a {point $(x_0,t_0)$} such that $u_k(x_0,t_0)>\frac{k}{2}$, $u_k$ is unbounded as $k\rightarrow +\infty$. However, noting that
                           \begin{align*}
\sup_{(x,t)\in \left(0,\frac{\pi}{2}\right)\times (0,T)}|f_k(x,t)-f_l(x,t)|
=\sqrt{2}|k-l|\sup_{(x,t)\in \left(0,\frac{\pi}{2}\right)\times (0,T)}|\sin x\cos\left(t-\frac{\pi}{4}\right)|,\forall T>0,
     \end{align*}
     and
\begin{align*}
&\sup_{t\in (0,T)}|d_k(t)-d_l(t)|=|k-l| \sup_{t\in (0,T)}|\sin t|,\forall T>0,
      \end{align*}
       for all $ k,l\in \mathbb{R}_{+}$, then the  system is RLES, having the estimate for all $k,l\in\mathbb{R}_{+}$:
\begin{align*}
\sup_{(x,t)\in \left(0,\frac{\pi}{2}\right)\times (0,T)}|u_k(x,t)-u_l(x,t)|
=& |k-l|\sup_{(x,t)\in \left(0,\frac{\pi}{2}\right)\times (0,T)}|\sin t \sin x|\\
\leq  &\sup_{(x,t)\in \left(0,\frac{\pi}{2}\right)\times (0,T)}|f_k(x,t)-f_l(x,t)|+ \sup_{t\in (0,T)}|d_k(t)-d_l(t)|,\forall T>0.
      \end{align*}
      \item                           It is obvious that RLES $\Rightarrow$ RKES $\Rightarrow$ RES.
   \end{enumerate}
\end{remark}
\begin{definition}[\cite{Mironchenko:2018}]\label{0-KLS}
 {The  system $\Sigma (\mathbb{U},\mathbb{F},\mathbb{D})$  is said to be} globally asymptotically stable at zero    uniformly w.r.t. the state (0-UGAS w.r.t. the state)  in the spatial sup-norm
if there exists a function $ \beta\in \mathcal {K}\mathcal {L}$ such that
\begin{align*}
    \sup_{x\in \Omega}|u(x,T)|\leq &\beta\left( \sup_{x\in \Omega}|u^0(x)|,T\right),\forall u^0\in \mathbb{U}, \forall T>0,
\end{align*}
where $ u$ is the solution of the system $\Sigma (\mathbb{U},\mathbb{F},\mathbb{D})$ corresponding to  the data $ (u^0,0,0)\in \mathbb{U}\times\mathbb{F}\times\mathbb{D}$.
\end{definition}
\begin{definition}\label{definition 1}
 {The}  system $\Sigma (\mathbb{U},\mathbb{F},\mathbb{D})$  is said to be input-to-state stable (ISS) in the spatial sup-norm
 w.r.t.  in-domain and boundary disturbances in $\mathbb{F}\times \mathbb{D}$, if there exist functions $\beta\in \mathcal {K}\mathcal {L}$ and $ \gamma_f,\gamma_d\in \mathcal {K}$ such that for all $(u^0,f,d)\in \mathbb{U}\times\mathbb{F}\times\mathbb{D}$:
\begin{align}\label{Eq: ISS def}
      \sup_{x\in \Omega}|u(x,T)|
     \leq &\beta\left( \sup_{x\in \Omega}|u^0(x)|,T\right)
      +\gamma_f \left(\sup_{(x,t)\in Q_T}|f (x,t)|\right)+\gamma_d \left(\sup_{(x,t)\in\partial_l Q_T}|d(x,t)|\right),\forall T>0,
\end{align}
where $u$ is the solution of the system $\Sigma (\mathbb{U},\mathbb{F},\mathbb{D})$ corresponding to the data $(u^0,f,d)\in \mathbb{U}\times\mathbb{F}\times\mathbb{D}$.

Furthermore, the system $\Sigma (\mathbb{U},\mathbb{F},\mathbb{D})$ is said to be exponentially input-to-state stable (EISS) in the spatial sup-norm
 w.r.t. in-domain and boundary disturbances in $\mathbb{F}\times \mathbb{D}$, if there exist constants $M,{\sigma} > 0$ such that $\beta( r,t) =Mre^{-{\sigma} t}$ in \eqref{Eq: ISS def} for all $r\geq 0$.
\end{definition}
\begin{proposition} \label{Proposition UKC}
If  {the  system} $\Sigma (\mathbb{U},\mathbb{F},\mathbb{D})$ is RKES in the  sup-norm w.r.t. in-domain and boundary disturbances in $\mathbb{F}\times \mathbb{D}$ and 0-UGAS w.r.t. the state  in the spatial sup-norm, then {it} is ISS in the spatial sup-norm  w.r.t. in-domain and boundary disturbances in $\mathbb{F}\times \mathbb{D}$.
 \end{proposition}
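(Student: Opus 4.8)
The plan is to realize the general-data trajectory as a perturbation of the zero-disturbance trajectory and to control the perturbation by RKES and the unperturbed part by 0-UGAS. Fix arbitrary data $(u^0,f,d)\in\mathbb{U}\times\mathbb{F}\times\mathbb{D}$ and let $u$ be the corresponding weak solution; by Proposition~\ref{existence} it exists, is unique, and $u(\cdot,T)\in W^{1,p}(\Omega)\hookrightarrow C(\overline{\Omega})$ for every $T>0$ (since $p>n$), so all the spatial sup-norms below are finite. Let $u_0$ denote the weak solution of $\Sigma(\mathbb{U},\mathbb{F},\mathbb{D})$ associated with the data $(u^0,0,0)$, i.e.\ the same initial condition but vanishing in-domain and boundary disturbances; this trajectory is again well-defined by Proposition~\ref{existence}.

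First I would apply the RKES hypothesis to the pair of data $(u^0,f,d)$ and $(u^0,0,0)$, which is legitimate because both share the initial condition $u^0$ as required in Definition~\ref{UKC}. This produces functions $\gamma_f,\gamma_d\in\mathcal{K}$, independent of the data, such that for all $T>0$
\[
\sup_{(x,t)\in Q_T}|u(x,t)-u_0(x,t)|\le \gamma_f\Bigl(\sup_{(x,t)\in Q_T}|f(x,t)|\Bigr)+\gamma_d\Bigl(\sup_{(x,t)\in\partial_lQ_T}|d(x,t)|\Bigr).
\]
Restricting the supremum on the left-hand side to the time slice $t=T$ yields the same upper bound for $\sup_{x\in\Omega}|u(x,T)-u_0(x,T)|$. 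Next I would invoke 0-UGAS w.r.t.\ the state for $u_0$ (whose data is exactly $(u^0,0,0)$): there is $\beta\in\mathcal{K}\mathcal{L}$, independent of $u^0$, with $\sup_{x\in\Omega}|u_0(x,T)|\le\beta\bigl(\sup_{x\in\Omega}|u^0(x)|,T\bigr)$ for all $T>0$.

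Finally I would combine the two estimates by the triangle inequality $\sup_{x\in\Omega}|u(x,T)|\le\sup_{x\in\Omega}|u_0(x,T)|+\sup_{x\in\Omega}|u(x,T)-u_0(x,T)|$, which gives precisely \eqref{Eq: ISS def} with the very same $\beta$, $\gamma_f$, $\gamma_d$; hence $\Sigma(\mathbb{U},\mathbb{F},\mathbb{D})$ is ISS in the spatial sup-norm. I do not expect any real obstacle here: the argument is just a chaining of the two hypotheses, and the only points deserving a word of care are that the comparison trajectory $u_0$ is well-defined (Proposition~\ref{existence}) and that the relevant spatial sup-norms are finite (Sobolev embedding). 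The genuine work of the paper is in establishing RKES and 0-UGAS for concrete PDE classes, not in this implication.
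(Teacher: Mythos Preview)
Your proposal is correct and follows essentially the same approach as the paper: compare the trajectory with data $(u^0,f,d)$ to the one with data $(u^0,0,0)$, bound the difference on $Q_T$ by RKES (then restrict to the slice $t=T$), bound the zero-disturbance trajectory by 0-UGAS, and conclude by the triangle inequality. The only cosmetic difference is notation ($u_0$ versus the paper's $v$); your added remarks on well-posedness and the Sobolev embedding are sound and slightly more explicit than the paper's version.
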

 \begin{proof}
For any $(u^0,f,d)\in\mathbb{U}\times \mathbb{F}\times \mathbb{D}$, let $u,v$ be the solutions of {the} system $\Sigma (\mathbb{U},\mathbb{F},\mathbb{D})$ with data $(u^0,f,d)$ and $(u^0,0,0)$, respectively. For simplicity, we write $|g|_{\infty,\omega}:=\sup_{y\in\omega}|g(y)|$ for a function $g$ defined on a domain $\omega$ of $\mathbb{R}^{n}$  or $\mathbb{R}^{n+1}$. Since $ \Sigma (\mathbb{U},\mathbb{F},\mathbb{D})$ is RKES in the sup-norm w.r.t. in-domain and boundary disturbances, and $u$ and $v$ are continuous in $T$ (see {Proposition~\ref{existence}}), there exist functions $\gamma_d,\gamma_f\in \mathcal {K}$ such that
\begin{align*}
   |u(\cdot,T)-v(\cdot,T)|_{\infty,\Omega}
    \leq&  |u-v|_{\infty,Q_T}\\
    \leq &\gamma_f \big(|f-0|_{{\infty}, Q_T}\big)+\gamma_d \big(|d-0|_{{\infty},\partial_lQ_T}\big)
     \\
      =&\gamma_f \big(|f|_{{\infty}, Q_T}\big)+\gamma_d \big(|d|_{\infty,\partial_lQ_T}\big), \ \forall T>0.
      \end{align*}
 Since $ \Sigma (\mathbb{U},\mathbb{F},\mathbb{D})$ is 0-UGAS w.r.t. the state in the spatial sup-norm, there exists a function $ \beta\in \mathcal {K}\mathcal {L}$ such that
 \begin{align*}
   |v(\cdot,T)|_{\infty,\Omega}\leq &\beta( |u^0|_{\infty,\Omega)},T) \ \forall T>0.
\end{align*}
It follows that
 \begin{align*}
   |u(\cdot,T)|_{\infty,\Omega} \leq& |u(\cdot,T)-v(\cdot,T)|_{\infty,\Omega}+ |v(\cdot,T)|_{\infty,\Omega}
   \leq \beta( |u^0|_{\infty,\Omega},T)
      +\gamma_f \big(|f|_{{\infty}, Q_T}\big)
      +\gamma_d \big(|d|_{{\infty},\partial_lQ_T}\big), \ \forall T>0,
\end{align*}
which implies that $\Sigma (\mathbb{U},\mathbb{F},\mathbb{D})$ is ISS in the spatial sup-norm
 w.r.t.  in-domain and boundary disturbances in $\mathbb{F}\times \mathbb{D}$.
$\hfill\blacksquare$\end{proof}
\begin{remark}
In general, for a nonlinear system, it is not an easy task to establish the ISS w.r.t. boundary disturbances directly.
{Proposition~\ref{Proposition UKC} provides an alternative for ISS analysis of 0-UGAS systems, which amounts to only assessing the properties of RKES and may be more easily obtained.}
\end{remark}

\subsection{Main stability results}\label{Sec: main results}
Assume further that
 \begin{align}\label{increasing property}
 (h(x,t,\xi_1)-h(x,t,\xi_2))(\xi_1-\xi_2)\geq 0
 \end{align}
 for all $x\in \overline{\Omega},t\in \mathbb{R}_{\geq 0},\xi_1,\xi_2\in\mathbb{R}$.

The first main result is on the RKES in the  sup-norm w.r.t. in-domain and boundary disturbances, whose proof is  provided in Appendix.
\begin{theorem} \label{Theorem RKES-Robin} The following statements hold true.
 \begin{enumerate}
  \item[(i)] Assume that $\underline{c}>0$ {in \eqref{underline}}. System \eqref{PDE} with the Robin boundary condition \eqref{Robin} is RLES in the  sup-norm w.r.t. in-domain and boundary  disturbances in $\mathbb{F}\times \mathbb{D}$, having the estimate:
\begin{align}\label{190625}
\sup_{(x,t)\in Q_T}|u_1(x,t)-u_2(x,t)|
 \leq& \frac{2C_{S}^2}{\min\{\underline{a},\underline{c}\}}|\Omega|^{\frac{q-2}{q}}2^{\frac{3q-4}{2q-4}} \sup_{(x,t)\in Q_T}|f_1(x,t)-f_2(x,t)|\notag\\
     &+\frac{1}{\underline{m}}\sup_{(x,t)\in\partial_l Q_T}|d_1(x,t)-d_2(x,t)| ,\forall T>0,
\end{align}
for all $ (f_i,d_i)\in \mathbb{F}\times\mathbb{D},i=1,2$, where $u_i $ is the solution of the system corresponding to the data $ (u^0,f_i,d_i)\in \mathbb{U}\times\mathbb{F}\times\mathbb{D}, i=1,2 $, and {$q$ and $C_S$ are constants specified in Lemma~\ref{Sobolev inequality}(i)}.
\item[(ii)] Assume that $\underline{c}\geq0$ {in \eqref{underline}}. System \eqref{PDE} with the Dirichlet boundary condition \eqref{Dirichlet} is RLES in the  sup-norm w.r.t. in-domain and boundary  disturbances in $\mathbb{F}\times \mathbb{D}$, having the estimate:
\begin{align}
 \sup_{(x,t)\in Q_T}|u_1(x,t)-u_2(x,t)|
 \leq& \frac{C_{P}^2}{\underline{a}}|\Omega|^{\frac{q-2}{q}}2^{\frac{3q-4}{2q-4}} \sup_{(x,t)\in Q_T}|f_1(x,t)-f_2(x,t)|\notag\\
     +&\sup_{(x,t)\in\partial_l Q_T}|d_1(x,t)-d_2(x,t)|,\forall T>0,\label{012801}
\end{align}
for all $ (f_i,d_i)\in \mathbb{F}\times\mathbb{D},i=1,2$, where $u_i $ is the solution of the system corresponding to the data $ (u^0,f_i,d_i)\in \mathbb{U}\times\mathbb{F}\times\mathbb{D}, i=1,2 $, and {$q$ and $C_P$ are constants specified in Lemma~\ref{Sobolev inequality}(ii)}.

 {Furthermore, if $\underline{c}>0$, it holds that
\begin{align}
 \sup_{(x,t)\in Q_T}|u_1(x,t)-u_2(x,t)|
 \leq&  C_0|\Omega|^{\frac{q-2}{q}}2^{\frac{3q-4}{2q-4}} \sup_{(x,t)\in Q_T}|f_1(x,t)-f_2(x,t)|\notag\\
     +&\sup_{(x,t)\in\partial_l Q_T}|d_1(x,t)-d_2(x,t)|,\forall T>0,\label{012802}
\end{align}
for all $ (f_i,d_i)\in \mathbb{F}\times\mathbb{D},i=1,2$, where $u_i $ is the solution of the system corresponding to the data $ (u^0,f_i,d_i)\in \mathbb{U}\times\mathbb{F}\times\mathbb{D}, i=1,2 $, $C_0:=  \min\left\{\frac{2C_{S}^2}{\min\{\underline{a},\underline{c}\}},\frac{C_{P}^2}{\underline{a}}\right\}$, and $q,C_S,C_P$ are constants specified in Lemma~\ref{Sobolev inequality}.}
\end{enumerate}
\end{theorem}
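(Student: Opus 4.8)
The plan is to dominate the difference of the two solutions, pointwise on $Q_T$ and uniformly in $T$, by a single time-independent function built from a stationary elliptic problem whose spatial sup-norm is in turn estimated by De~Giorgi iteration. First I would set $w:=u_1-u_2$. By Definition~\ref{weak solution} and the linearity of $\mathscr{L}$ and $\mathscr{B}$ in $u$, subtracting the weak identities for $u_1$ and $u_2$ (which share the common initial datum $u^0$) shows that $w$ satisfies the weak formulation of the parabolic problem with $w(\cdot,0)=0$, in-domain source $\widetilde{f}:=f_1-f_2$, boundary datum $\widetilde{d}:=d_1-d_2$, and with the nonlinearity $h(x,t,u)$ replaced by $h(x,t,u_1)-h(x,t,u_2)=\widetilde{h}(x,t,w)$, where $\widetilde{h}(x,t,\xi):=h(x,t,u_2(x,t)+\xi)-h(x,t,u_2(x,t))$ satisfies $\widetilde{h}(x,t,0)=0$ and, by \eqref{increasing property}, $\widetilde{h}(x,t,\xi)\xi\geq0$ for all $\xi\in\mathbb{R}$. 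Write $F:=\sup_{(x,t)\in Q_T}|\widetilde{f}(x,t)|$, $D:=\sup_{(x,t)\in\partial_lQ_T}|\widetilde{d}(x,t)|$, and $k_0:=\tfrac{1}{\underline{m}}D$ in the Robin case \eqref{Robin}, resp.\ $k_0:=D$ in the Dirichlet case \eqref{Dirichlet}. Since every bound in the statement is symmetric under $(f_1,d_1)\leftrightarrow(f_2,d_2)$, it is enough to bound $\sup_{Q_T}w$; applying the same argument to $u_2-u_1$ then gives the estimate for $|u_1-u_2|$.

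The core step is a comparison with an elliptic majorant. Let $\overline{u}\in W^{1,p}(\Omega)$ be the unique weak solution of $-\text{div}(a\nabla\overline{u})+c\overline{u}=F$ in $\Omega$ together with $a\tfrac{\partial\overline{u}}{\partial\bm{\nu}}+m\overline{u}=D$ on $\partial\Omega$ (Robin case), resp.\ $\overline{u}=D$ on $\partial\Omega$ (Dirichlet case); since $F\geq0$ and $D\geq0$, the weak maximum principle gives $\overline{u}\geq0$ on $\overline{\Omega}$. To bound $\|\overline{u}\|_{L^{\infty}(\Omega)}$ I would run De~Giorgi iteration on the elliptic weak identity, testing it with $(\overline{u}-k)_+$ for levels $k\geq k_0$. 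For such $k$ the boundary contribution is nonpositive: in the Robin case because $D-m\overline{u}\leq D-\underline{m}k\leq D-\underline{m}k_0=0$ on $\{\overline{u}>k\}\cap\partial\Omega$, and in the Dirichlet case because $(\overline{u}-k)_+$ then vanishes on $\partial\Omega$, hence lies in $W^{1,2}_0(\Omega)$ so that the Poincar\'{e}--Sobolev inequality of Lemma~\ref{Sobolev inequality}(ii) applies; moreover $\int_{\Omega}c\overline{u}(\overline{u}-k)_+\geq\underline{c}\int_{\Omega}(\overline{u}-k)_+^{2}\geq0$. Combining the surviving diffusion term (coercive with constant $\min\{\underline{a},\underline{c}\}$ on $W^{1,2}(\Omega)$ in the Robin case, thanks to $\underline{c}>0$, resp.\ with constant $\underline{a}$ on $W^{1,2}_0(\Omega)$ via Poincar\'{e}), the Sobolev inequality of Lemma~\ref{Sobolev inequality}, and H\"{o}lder's inequality over the level set $\{\overline{u}>k\}$, I would reach a recursive inequality $|\{\overline{u}>h\}|\leq M^{q}(h-k)^{-q}|\{\overline{u}>k\}|^{1+\varepsilon}$ for all $h>k\geq k_0$, with $\varepsilon=q-2>0$ and $M=\tfrac{C_S^{2}}{\min\{\underline{a},\underline{c}\}}F$ (resp.\ $M=\tfrac{C_P^{2}}{\underline{a}}F$). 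The classical De~Giorgi-type geometric-convergence lemma then yields $|\{\overline{u}>k_0+d\}|=0$ for an explicit $d$, i.e.\ $\|\overline{u}\|_{L^{\infty}(\Omega)}\leq k_0+CF$, the constant $C$ having the stated form $\tfrac{2C_S^{2}}{\min\{\underline{a},\underline{c}\}}|\Omega|^{\frac{q-2}{q}}2^{\frac{3q-4}{2q-4}}$ (resp.\ $\tfrac{C_P^{2}}{\underline{a}}|\Omega|^{\frac{q-2}{q}}2^{\frac{3q-4}{2q-4}}$), the factors $|\Omega|^{\frac{q-2}{q}}$ and $2^{\frac{3q-4}{2q-4}}$ coming out of the iteration from the initial bound $|\{\overline{u}>k_0\}|\leq|\Omega|$. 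For the last claim in (ii), when $\underline{c}>0$ I would run this estimate twice, once via Poincar\'{e} and once via the zeroth-order term, and retain the smaller coefficient, namely $C_0=\min\{\tfrac{2C_S^{2}}{\min\{\underline{a},\underline{c}\}},\tfrac{C_P^{2}}{\underline{a}}\}$.

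It remains to compare. Put $z:=w-\overline{u}$; subtracting the (time-independent) weak identity for $\overline{u}$ from the weak formulation for $w$, the function $z$ weakly solves $z_t-\text{div}(a\nabla z)+cz=\widetilde{f}-F-\widetilde{h}(x,t,w)$ with boundary datum $\widetilde{d}-D$ (Robin), resp.\ trace $z=\widetilde{d}-D$ on $\partial_lQ_T$ (Dirichlet), and $z(\cdot,0)=-\overline{u}\leq0$. Testing with $z_+$ --- admissible after the usual Steklov regularization, Proposition~\ref{existence} providing $w\in C^{1}((0,T);L^{p'}(\Omega))$ and $W^{1,p}(\Omega)\hookrightarrow C(\overline{\Omega})$ making $z_+$ bounded --- I observe that on $\{z>0\}$ one has $w>\overline{u}\geq0$, hence $\widetilde{h}(x,t,w)\geq0$, whereas $\widetilde{f}-F\leq0$; the boundary term generated by $z_+$ is again $\leq0$ because $\widetilde{d}-D\leq0$ (and $-mz\leq0$ on $\{z>0\}$ in the Robin case, resp.\ $z_+\equiv0$ on $\partial\Omega$ in the Dirichlet case); and $\int_{\Omega}cz\,z_+\geq0$. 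Therefore $\tfrac{d}{dt}\|z_+(\cdot,t)\|_{L^{2}(\Omega)}^{2}\leq0$ with $\|z_+(\cdot,0)\|_{L^{2}(\Omega)}=0$, so $z_+\equiv0$, that is $w\leq\overline{u}$ a.e.\ on $Q_T$; since $w$ is continuous on $\overline{Q}_T$ this gives $\sup_{Q_T}w\leq\|\overline{u}\|_{L^{\infty}(\Omega)}\leq k_0+CF$, uniformly in $T>0$. Symmetrizing then yields \eqref{190625}, \eqref{012801} and \eqref{012802}.

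I expect the main obstacle to be the bookkeeping in the elliptic De~Giorgi iteration that produces the precise constants --- the exact $2$-exponent $\tfrac{3q-4}{2q-4}$ and the power $\tfrac{q-2}{q}$ of $|\Omega|$ depend sensitively on the way H\"{o}lder's inequality and the geometric-convergence lemma are quantified --- together with the rigorous justification of the truncated, sign-restricted test functions $(\overline{u}-k)_+$ and $z_+$ within the weak-solution class of Definition~\ref{weak solution} (this is where the extra time regularity $C^{1}((0,T);L^{p'}(\Omega))$ from Proposition~\ref{existence} enters). A secondary delicate point is checking, in the Robin case, that the boundary integral and the zeroth-order integral carry the correct sign precisely at the threshold $k_0=\tfrac{1}{\underline{m}}D$, since this is exactly what pins down the coefficient $\tfrac{1}{\underline{m}}$ of $\sup_{\partial_lQ_T}|d_1-d_2|$ in \eqref{190625}.
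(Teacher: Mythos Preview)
Your argument is correct but follows a genuinely different route from the paper's. You decompose the task into (a) an \emph{elliptic} De~Giorgi iteration for a stationary majorant $\overline{u}$ solving $-\text{div}(a\nabla\overline{u})+c\overline{u}=F$ with constant boundary data $D$, and (b) a parabolic comparison $w\leq\overline{u}$ obtained by an $L^2$-energy estimate for $(w-\overline{u})_+$. The paper instead runs De~Giorgi iteration \emph{directly on the parabolic problem} for $w$: it tests with $(w-k)_+\chi_{[t_1,t_2]}(t)$, locates the time $t_0\in(0,T]$ at which $I_k(t):=\|(w(\cdot,t)-k)_+\|_{L^2(\Omega)}^2$ is maximal, notes that the time-derivative contribution is then nonnegative, and is left with the purely spatial inequality
\[
\underline{a}\,\|\nabla(w(\cdot,t_0)-k)_+\|_{L^2}^2+\underline{c}\,\|(w(\cdot,t_0)-k)_+\|_{L^2}^2\leq\int_\Omega|\widetilde{f}(\cdot,t_0)|\,(w(\cdot,t_0)-k)_+\,\diff{x},
\]
which is exactly the starting point of your elliptic step --- hence the constants agree, and the iteration is closed by invoking \cite[Theorem~4.2.1]{Wu2006}. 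Your decomposition is more modular and transparently reuses the elliptic maximum principle, at the price of introducing an auxiliary boundary-value problem whose well-posedness and $W^{1,p}$-regularity you must justify separately; the paper's ``freeze at the worst time'' trick is self-contained and avoids any auxiliary problem, but requires a delicate treatment of the limit when $t_0=T$. The sign analysis at the threshold $k_0$ (pinning down the $1/\underline{m}$ in the Robin case) and the handling of the nonlinear term via \eqref{increasing property} are identical in the two approaches; your minor slip $M=C_S^2F/\min\{\underline{a},\underline{c}\}$ should read $2C_S^2F/\min\{\underline{a},\underline{c}\}$, consistent with the final constant you state.
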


 As an application of RKES, we have the second main result on the ISS in the spatial sup-norm  for system~\eqref{PDE} with  in-domain and boundary disturbances, whose proof is also provided in Appendix.


\begin{theorem} \label{Theorem ISS} The following statements hold true.
\begin{enumerate}
\item[(i)] Assume that $\underline{c}>0$  {in \eqref{underline}}. System \eqref{PDE} with the Robin boundary condition \eqref{Robin} is EISS in the spatial sup-norm
 w.r.t. in-domain and boundary disturbances in $\mathbb{F}\times \mathbb{D}$, having the  estimate:
\begin{align}\label{++13''}
\sup_{x\in \Omega}|u(x,T)|
 \leq &\sup_{x\in\Omega}|u^0(x)|e^{-\underline{c}  T}
      +\frac{2C_{S}^2}{\min\{\underline{a},\underline{c}\}}|\Omega|^{\frac{q-2}{q}}2^{\frac{3q-4}{2q-4}}  \sup_{(x,t)\in Q_T}|f(x,t)| +\frac{1}{\underline{m}}\sup_{(x,t)\in\partial_l Q_T}|d(x,t)|,\forall T>0,
\end{align}
where {$q$ and $C_S$ are constants specified in Lemma~\ref{Sobolev inequality}(i)}.
\item[(ii)] Assume that $\underline{c}>0$  {in \eqref{underline}}. System \eqref{PDE} with the Dirichlet boundary condition \eqref{Dirichlet} is EISS in the spatial sup-norm
 w.r.t. in-domain and boundary disturbances in $\mathbb{F}\times \mathbb{D}$, having the  estimate:
\begin{align}\label{++13'''}
\sup_{x\in \Omega}|u(x,T)|
 \leq  \sup_{x\in\Omega}|u^0(x)|e^{-\underline{c} T}
      +{C_0}|\Omega|^{\frac{q-2}{q}}2^{\frac{3q-4}{2q-4}} \sup_{(x,t)\in Q_T}|f(x,t)| +\sup_{(x,t)\in\partial_l Q_T}|d(x,t)|,\forall T>0,
\end{align}
where {the constants $q,C_0$ are  the same as in \eqref{012802}}.
\end{enumerate}
\end{theorem}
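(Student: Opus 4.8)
The plan is to derive both estimates by combining the relative Lipschitz-equi-stability of Theorem~\ref{Theorem RKES-Robin}, a sharp exponential decay estimate for the zero-input dynamics, and the comparison argument already carried out in the proof of Proposition~\ref{Proposition UKC}. Notice that the coefficients multiplying $\sup_{Q_T}|f(x,t)|$ and $\sup_{\partial_lQ_T}|d(x,t)|$ in \eqref{++13''} (resp. \eqref{++13'''}) coincide exactly with the Lipschitz constants appearing in \eqref{190625} (resp. \eqref{012802}). Hence the only new ingredient needed is that, under $\underline{c}>0$, the system is $0$-UGAS w.r.t. the state in the spatial sup-norm in the \emph{exponential} form of Definition~\ref{0-KLS}, namely with $\beta(r,t)=re^{-\underline{c} t}$; once this is established, Proposition~\ref{Proposition UKC} (more precisely, its proof, which transfers the very $\beta$ from $0$-UGAS into the ISS estimate) upgrades the conclusion to EISS.

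To establish the decay estimate, we fix $u^0\in\mathbb{U}$, let $v$ be the (unique, by Proposition~\ref{existence}) weak solution of \eqref{PDE} with data $(u^0,0,0)$, and set $M:=\sup_{x\in\Omega}|u^0(x)|$, which is finite by the embedding $W^{1,p}(\Omega)\hookrightarrow C(\overline\Omega)$ recalled just before Definition~\ref{UC}. The rescaled function $\tilde v:=e^{\underline{c} t}v$ solves $\tilde v_t-\operatorname{div}(a\nabla\tilde v)+(c-\underline{c})\tilde v+e^{\underline{c} t}h(x,t,e^{-\underline{c} t}\tilde v)=0$ together with the homogeneous version of \eqref{Robin} (resp. \eqref{Dirichlet}). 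Testing the weak formulation with the truncation $(\tilde v-k)_+$ at a level $k\ge M$ and integrating over $\Omega\times(0,t)$, every term carries a favourable sign on $\{\tilde v>k\ge 0\}$: the principal part contributes $\int_0^t\!\!\int_\Omega a|\nabla(\tilde v-k)_+|^2\,\diff{x}\diff{s}\ge 0$; the zero-order term contributes $\int_0^t\!\!\int_\Omega(c-\underline{c})\tilde v(\tilde v-k)_+\,\diff{x}\diff{s}\ge 0$ because $c\ge\underline{c}$ and $\tilde v>0$ there; the nonlinear term contributes $\int_0^t\!\!\int_\Omega e^{\underline{c} s}h(x,s,e^{-\underline{c} s}\tilde v)(\tilde v-k)_+\,\diff{x}\diff{s}\ge 0$, since by the monotonicity \eqref{increasing property} one has $h(x,s,e^{-\underline{c} s}\tilde v)\ge h(x,s,0)=0$ on that set; and the boundary term equals $\int_0^t\!\!\int_{\partial\Omega}m\tilde v(\tilde v-k)_+\,\diff{x}\diff{s}\ge 0$ in the Robin case and vanishes in the Dirichlet case since $(\tilde v-k)_+=0$ on $\partial_lQ_T$. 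Consequently $\tfrac12\tfrac{\mathrm d}{\mathrm dt}\|(\tilde v(\cdot,t)-k)_+\|_{L^2(\Omega)}^2\le 0$, and since $(\tilde v(\cdot,0)-k)_+=(u^0-k)_+\equiv 0$ for $k\ge M$, we obtain $(\tilde v-k)_+\equiv 0$, i.e. $v(x,t)\le Me^{-\underline{c} t}$; applying the same reasoning to $-v$ gives $v(x,t)\ge -Me^{-\underline{c} t}$. Thus $\sup_{x\in\Omega}|v(x,T)|\le Me^{-\underline{c} T}$ for all $T>0$, which is exactly $0$-UGAS with $\beta(r,t)=re^{-\underline{c} t}$. (This is nothing but the initial level of a De~Giorgi iteration; no further levels are required here because the effect of the initial datum is already captured at the single level $k=M$.)

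It then remains to assemble the pieces as in the proof of Proposition~\ref{Proposition UKC}. Given $(u^0,f,d)\in\mathbb{U}\times\mathbb{F}\times\mathbb{D}$, let $u$ and $v$ be the solutions for data $(u^0,f,d)$ and $(u^0,0,0)$, respectively. For part~(i), Theorem~\ref{Theorem RKES-Robin}(i) (applicable since $\underline{c}>0$) gives the bound \eqref{190625} for $\sup_{Q_T}|u-v|$; combining it with the decay estimate above via $\sup_{x\in\Omega}|u(x,T)|\le\sup_{x\in\Omega}|u(x,T)-v(x,T)|+\sup_{x\in\Omega}|v(x,T)|$ yields \eqref{++13''}. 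For part~(ii) the argument is identical, using instead \eqref{012802} of Theorem~\ref{Theorem RKES-Robin}(ii) --- available precisely because $\underline{c}>0$ --- together with the Dirichlet form of the decay estimate, to obtain \eqref{++13'''}.

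I expect two points to require care. The technical heart of the argument is the energy computation in the second step: the truncation $(\tilde v-k)_+$ is not an admissible test function in Definition~\ref{weak solution} (whose test functions live in the dual space $(W^{1,p}(\Omega))'$ and are $C^1$ in time), so it has to be justified by a regularization --- for instance Steklov time averages together with a smooth nondecreasing approximation of $s\mapsto s_+$ --- with a passage to the limit during which the sign of the nonlinear term is controlled by \eqref{increasing property} and the growth bound \eqref{4a}. Secondly, for part~(ii) there is an apparent circularity, since the proof of Proposition~\ref{existence} invokes Theorem~\ref{Theorem ISS}(ii); this is resolved by running the estimate on the maximal existence interval $[0,T_0)$ furnished by \cite{Amann:1988}, where it is an \emph{a priori} bound that in particular forces $\|u(\cdot,t)\|_{L^1(\Omega)}\le|\Omega|\sup_{Q_{T_0}}|u|<+\infty$, thereby closing the loop.
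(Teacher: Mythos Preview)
Your proposal is correct and follows essentially the same route as the paper: prove exponential $0$-UGAS for the zero-input system via the rescaling $ve^{\lambda t}$ together with a truncation argument, then combine with the RLES estimates of Theorem~\ref{Theorem RKES-Robin} through the splitting in Proposition~\ref{Proposition UKC}. The only difference is cosmetic --- the paper takes $\lambda=\delta<\underline{c}$, invokes the full De~Giorgi iteration of Theorem~\ref{Theorem RKES-Robin}(i) (which needs a strictly positive transformed zero-order coefficient) to obtain \eqref{22}, and then lets $\delta\to\underline{c}$, whereas you take $\lambda=\underline{c}$ directly and observe that with zero forcing a single truncation level $k=M$ already suffices, a mild streamlining that avoids the limiting step.
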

\begin{remark}
{For Theorem~\ref{Theorem ISS}, it is possible to weaken the condition  $ (u^0,f,d)\in \mathbb{U}\times\mathbb{F}\times \mathbb{D}$ to $(u^0,f,d)\in L^{\infty}(\Omega) \times L^{\infty}_{loc}(\mathbb{R}_{\geq 0};L^{\infty}(\Omega))\times L^{\infty}_{loc}(\mathbb{R}_{\geq 0};L^{\infty}(\Omega))$, and hence obtain
\begin{align*}
\|u(\cdot,T)\|_{L^\infty(\Omega)}
 \leq &  \|u^0\|_{L^\infty(\Omega)}e^{-\underline{c}  T}
      +\frac{2C_{S}^2}{\min\{\underline{a},\underline{c}\}}|\Omega|^{\frac{q-2}{q}}2^{\frac{3q-4}{2q-4}} \|f\|_{L^\infty(Q_T)} +\frac{1}{\underline{m}}\|d\|_{L^\infty(Q_T)},\forall T>0,
\end{align*}
and
\begin{align*}
\|u(\cdot,T)\|_{L^\infty(\Omega)}
 \leq & \|u^0\|_{L^\infty(\Omega)}e^{-\underline{c} T}
       +{C_0}|\Omega|^{\frac{q-2}{q}}2^{\frac{3q-4}{2q-4}} \|f\|_{L^\infty(Q_T)} +\|d\|_{L^\infty(Q_T)},\forall T>0,
\end{align*}
for a weak solution (in a certain sense differing from Definition~\ref{weak solution}) of the system \eqref{PDE} with the Robin boundary condition \eqref{Robin}, and the Dirichlet boundary condition \eqref{Dirichlet}, respectively. Indeed, letting $\{u^0_n\},\{f_n\},\{d_n\}$ be sequences of sufficiently smooth functions, which satisfy $(u^0_n,f_n,d_n)\rightarrow (u^0,f,d)$ in $ L^{\infty}(\Omega) \times L^{\infty}(Q_T)\times L^{\infty}(Q_T)$  as $n\rightarrow +\infty$, we consider the approximating equation  \eqref{PDE} with  data $(u^0_n,f_n,d_n)$, and establish uniform \emph{a priori} estimates of  strong (or smooth)  solutions $\{u_n\}$ as in \cite{Zheng:2020TCL}. For the existence of a weak solution $u$, we may prove by using the uniform \emph{a priori} estimates of  $\{u_n\}$ and taking limits in  appropriate functional spaces. For the ISS in the spatial $L^{\infty}$-norm of $u_n$, we may prove as in the proof of Theorem~\ref{Theorem ISS}; see Appendix. Then, by taking limits of $(u_n,u^0_n,f_n,d_n)$ within the ISS estimates of $\{u_n\}$, we may obtain the  aforementioned ISS estimates of $u$.}
\end{remark}

\section{Illustrative examples on ISS in the spatial sup-norm {of parabolic PEDs}}\label{Sec. examples}
In this section, we illustrate the results on ISS estimates in the spatial sup-norm presented in Section~\ref{Sec. II} {through two examples}.
\subsection{A super-linear parabolic equation with a Robin boundary condition}
Consider the following super-linear parabolic equation:
\begin{subequations}\label{PDE-super-linear}
\begin{align}
u_t-\Delta u +cu+ u\ln (1+u^2)
 =&f~~~~~~~~\text{in}\ \Omega\times \mathbb{R}_+,\\
\frac{\partial u}{\partial\bm{\nu}}+mu=&d~~~~~~~~\text{on}\ \partial \Omega \times \mathbb{R}_{+},\\
u(\cdot,0)=&u^0(\cdot)~~\text{in}\   \Omega,
\end{align}
\end{subequations}
where $\Omega$ is an open bounded domain in $\mathbb{R}^n (n\geq 3)$ with a smooth boundary $\partial\Omega$, $\Delta$ is the Laplace operator, $c,m$ are positive constants with $c\geq 1$, $u^0\in\mathbb{U}$,  $d\in \mathbb{D} $, and  $f\in \mathbb{F}$.  We have the following result.

\begin{proposition}Assume that there exists a point $x^0\in\partial \Omega$ such that $\partial \Omega$ is flat near $x^0$ and lies in the plane $\{x:=(x_1,x_2,...,x_n)\in \mathbb{R}^n|~x_i=0\}$ for some $i\in\{1,2,...,n\}$, and $f(x,t)$ is  Lipschitz continuous in {$t\in \mathbb{R}_{\geq 0}$} for all $x\in \overline{\Omega}$. Then, {the} system~\eqref{PDE-super-linear} is EISS in the spatial sup-norm, having the  estimate:
\begin{align*}
\sup_{x\in \Omega}|u(x,T)|
 \leq &\sup_{x\in\Omega}|u^0(x)|e^{-c  T}
      +  \frac{9(n-1)^2}{(n-2)^2}   |\Omega|^{\frac{2}{n} }2^{\frac{n}{4}+8} \sup_{(x,t)\in Q_T} |f(x,t)| +\frac{1}{m}\sup_{(x,t)\in\partial_l Q_T}|d(x,t)|, \ \forall T>0.
\end{align*}
\end{proposition}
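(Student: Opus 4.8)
The plan is to recognize \eqref{PDE-super-linear} as the instance of \eqref{PDE} with the Robin boundary condition \eqref{Robin} given by $a\equiv 1$, the constants $c$ and $m$ as stated, $h(x,t,\xi)=\xi\ln(1+\xi^2)$, and $f,d,u^0$ as given, and then to invoke Theorem~\ref{Theorem ISS}(i) and simplify its estimate~\eqref{++13''}. The proof therefore splits into two essentially routine parts: verifying the standing hypotheses \eqref{4} and \eqref{increasing property} for this particular $h$, and evaluating the constants.

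For \eqref{4} I would argue as follows. Since $\ln(1+s)$ grows more slowly than any positive power of $s$, the quotient $\frac{|\xi|\ln(1+\xi^2)}{1+|\xi|^{1+2/n}}$ is bounded on $\mathbb{R}$, so $|h(x,t,\xi)|\le c_0\big(1+|\xi|^{\lambda}\big)$ with $\lambda:=1+\frac2n$, the right endpoint of the admissible range $[1,1+\frac2n]$ (one does need $\lambda>1$ here, since $\lambda=1$ fails). Differentiating gives $\partial_\xi h(x,t,\xi)=\ln(1+\xi^2)+\frac{2\xi^2}{1+\xi^2}$, hence $0\le\partial_\xi h\le\ln(1+\xi^2)+2=:H(|\xi|)$ with $H$ increasing on $\mathbb{R}_{\ge 0}$; the same computation shows $\xi\mapsto h(x,t,\xi)$ is nondecreasing, which is precisely \eqref{increasing property}. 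Finally $h$ is independent of $t$, so the left-hand side of the last inequality in~\eqref{4} equals $|f(x,t)-f(x,s)|\le L|s-t|$ on $[0,T]$, where $L$ is a Lipschitz constant of $f$ in $t$; since $H\ge H(0)=2>1$, the choice $\Psi(r):=Lr$ works. With $\underline c=c>0$ and $(u^0,f,d)\in\mathbb{U}\times\mathbb{F}\times\mathbb{D}$, Proposition~\ref{existence} supplies a unique weak solution and Theorem~\ref{Theorem ISS}(i) applies.

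It then remains to reduce \eqref{++13''} to the claimed bound. Here $\underline a=1$ and, because $c\ge 1$, $\underline c=c$ and $\min\{\underline a,\underline c\}=1$ (the only use of the assumption $c\ge1$), while $\underline m=m$ and the decay rate in the first term is $c$. Taking $q=\frac{2n}{n-2}$ (the exponent furnished by Lemma~\ref{Sobolev inequality}(i), legitimate for $n\ge3$) and using $\frac{q-2}{q}=\frac2n$ and $\frac{3q-4}{2q-4}=\frac n4+1$, one obtains $|\Omega|^{(q-2)/q}\,2^{(3q-4)/(2q-4)}=2\cdot2^{n/4}\,|\Omega|^{2/n}$; substituting the explicit value of $C_S$ from Lemma~\ref{Sobolev inequality}(i) and collecting powers of $2$ then turns $\frac{2C_S^2}{\min\{\underline a,\underline c\}}|\Omega|^{(q-2)/q}2^{(3q-4)/(2q-4)}$ into $\frac{9(n-1)^2}{(n-2)^2}|\Omega|^{2/n}2^{n/4+8}$, as asserted, the boundary gain being $\frac1{\underline m}=\frac1m$. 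I expect the main --- indeed the only --- point requiring care to be this last step: one must use the geometric hypothesis (that $\partial\Omega$ is flat near $x^0$ and lies in a coordinate hyperplane) to pin down the explicit $C_S$ in Lemma~\ref{Sobolev inequality}(i), and then check that the numerical factors combine into exactly the stated constant. There is no genuine analytic obstacle: the \emph{super-linearity} of $h$ notwithstanding, its growth is subcritical and it is monotone, so everything rests on Theorem~\ref{Theorem ISS}(i).
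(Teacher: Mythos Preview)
Your proposal is correct and matches the paper's approach: verify the structural hypotheses for $h(\xi)=\xi\ln(1+\xi^2)$, invoke Theorem~\ref{Theorem ISS}(i), and compute the constants explicitly. The only nuance is that Lemma~\ref{Sobolev inequality}(i) is stated for $q$ strictly below $2^*=\frac{2n}{n-2}$, so the paper first extracts the explicit critical embedding constant $C_1=\frac{24(n-1)}{n-2}$ (by tracking constants through Evans' extension and Gagliardo--Nirenberg--Sobolev arguments, which is precisely where the flatness hypothesis on $\partial\Omega$ enters), passes to $q<2^*$ via H\"older with $C_S=C_1|\Omega|^{(2^*-q)/(2^*q)}$, and finally sends $q\to 2^*$; your direct evaluation at $q=2^*$ gives the same numbers since $24^2\cdot 2^{n/4+2}=9\cdot 2^{n/4+8}$.
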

\begin{proof}
Setting $h(x,t,u):= u\ln (1+u^2)$, it is easy to verify that $h$ and $f$ satisfy the conditions proposed in Section~\ref{Sec. II}. Then according to Theorem~\ref{Theorem ISS}(i), system \eqref{PDE-super-linear} is EISS in the spatial sup-norm, having the estimate:
\begin{align}
\sup_{x\in \Omega}|u(x,T)|
 \leq &\sup_{x\in\Omega}|u^0(x)|e^{-c  T}
      +  C_{S}^2  |\Omega|^{\frac{q-2}{q}}2^{\frac{5q-8}{2q-4}} \sup_{(x,t)\in Q_T} |f(x,t)| +\frac{1}{m}\sup_{(x,t)\in\partial_l Q_T}|d(x,t)|,\forall T>0,\label{010902}
\end{align}
where {$q$ and $C_S$ are constants specified in Lemma~\ref{Sobolev inequality}(i)}.

Note that $\partial \Omega$ is flat near $x^0$ and lies in the   plane $\{x \in \mathbb{R}^n|~x_i=0\}$, then in  the following  Sobolev inequality \begin{align}
\|v\|_{L^{2^*}(\Omega)}\leq C_1(\|v\|_{L^2(\Omega)}+\|\nabla v\|_{L^2(\Omega)}),\forall v\in W^{1,2}(\Omega),\label{01091}
\end{align}
 the constant $C_1$ can be chosen as $C_1:=\frac{2(n-1)}{n-2}\times(1+3+4\times 2)=\frac{24(n-1)}{n-2}$;  see \cite[Theorem 2, \S 5.6.1]{Evans:2010}, whose proof  is based on Step 2 of the proof of  \cite[Theorem 1, \S 5.6.1]{Evans:2010}, and Steps 1-4 of the proof of \cite[Theorem 1, \S 5.4]{Evans:2010}.

 For any $q\in (2,2^*)$ with $2^*:=\frac{2n}{n-2}$, it follows from the H\"{o}lder's inequality  that
 \begin{align*}
\|v\|_{L^{q }(\Omega)}^{q }\leq  \|v\|_{L^{2^*}(\Omega)} ^{q} \|1\|_{L^{\frac{2^*}{2^*-q}}(\Omega)} =\|v\|_{L^{2^*}(\Omega)} ^{q}  |\Omega|^{\frac{ 2^*-q}{2^*}},
\end{align*}
 which along with \eqref{01091} gives for  $v\in W^{1,2}(\Omega)$:
 \begin{align*}
\|v\|_{L^{q}(\Omega)}\leq \frac{24(n-1)}{n-2} |\Omega|^{\frac{ 2^*-q}{2^*q}} (\|v\|_{L^2(\Omega)}+\|\nabla v\|_{L^2(\Omega)}).
\end{align*}
Thus $C_S$ in Lemma~\ref{Sobolev inequality}(i) can be chosen as $C_S:=\frac{24(n-1)}{n-2} |\Omega|^{\frac{ 2^*-q}{2^*q}}$. Finally, \eqref{010902} becomes
\begin{align*}
 \sup_{x\in \Omega}|u(x,T)|
 \leq &\sup_{x\in\Omega}|u^0(x)|e^{-c  T}
      +  \frac{24^2(n-1)^2}{(n-2)^2}   |\Omega|^{\frac{q-2}{q}+\left(\frac{ 2^*-q}{2^*q}\right)^2}2^{\frac{5q-8}{2q-4}} \sup_{(x,t)\in Q_T} |f(x,t)|\notag\\
     &+\frac{1}{m}\sup_{(x,t)\in\partial_l Q_T}|d(x,t)|, \ \forall T>0.
\end{align*}
Letting $q\rightarrow 2^*$ and by algebraic computations, we obtain the desired result.
$\hfill\blacksquare$\end{proof}
\begin{remark}
 Note that $\frac{24(n-1)}{n-2} |\Omega|^{\frac{ 2^*-q}{2^*q}}$ is not the best embedding constant in the Sobolev inequality given in Lemma~\ref{Sobolev inequality}(i).
 \end{remark}

\subsection{A $1$-D parabolic PDE  with a destabilizing term}
It is well-known that the following parabolic system
\begin{subequations}\label{unstable PDE0}
\begin{align}
u_t(x,t)=&u_{xx}(x,t)+cu(x,t),~(x,t)\in (0,1)\times\mathbb{R}_+,\\
u(0,t)=&u(1,t)=0,~~~~~~~~~~~~~~~t\in\mathbb{R}_+,\\
u(x,0)=&u^0(x),~~~~~~~~~~~~~~~~~~~~~~~x\in (0,1).
\end{align}
\end{subequations}
is unstable if the constant $c$ is positive and large.
{While, the system~\eqref{unstable PDE0} can be stabilized for any $c\in \mathbb{R}_{+}$ by a backstepping boundary control $u(1,t) = U(t)$ of the form \cite{Liu:2003,Krstic:2008}
\begin{align}\label{+24}
U(t):=-\int_0^1 k(1,y)u(y,t)\text{d}y,
\end{align}
where $k$ is the solution of the following equation
\begin{subequations}\label{PDE-k}
 \begin{align}
& k_{xx}(x,y)-k_{yy}(x,y)= (c +{\sigma})k(x,y),
 ~~~~~~~~0\leq y\leq x\leq 1,\\
&k(x,0)= 0,~~~~~~~~~~~~~~~~~~~~~~~~~~~~~~~~~~~~~~~~~~~~~~~~~~~~~0\leq x\leq 1,\\
&k_x(x,x)+k_y(x,x)+\frac{\text{d}}{\text{d}x}(k(x,x))= c +{\sigma},~0\leq x\leq 1,
\end{align}
\end{subequations}
with ${\sigma}$ being an arbitrary constant. Note that the existence of $k$ is guaranteed by \cite[Lemma 2.2]{Liu:2003}. Moreover, $k$ is twice continuously differentiable in $0 \leq y \leq x \leq 1$.}

As an application of Theorem~\ref{Theorem ISS}, we present below an ISS estimate in the spatial norm {for the system~\eqref{unstable PDE0} in closed loop with in-domain and boundary disturbances $f,d_0,d_1$, i.e.}:
\begin{subequations}\label{unstable PDE}
\begin{align}
u_t(x,t)=u_{xx}(x,t)+c u(x,t)
+f(x,t),\
&~(x,t)\in (0,1)\times\mathbb{R}_+,\\
u(0,t)=d_0(t),
u(1,t)=d_1(t)+U(t),\ &~t\in\mathbb{R}_+,\\
u(x,0)=u^0(x),\ &~x\in (0,1).
\end{align}
\end{subequations}

We have the following result.
\begin{proposition}\label{ISS-unstable} Assume that $c\in \mathbb{R}_+$ , $f\in C([0,1])$, $d_0,d_1 \in C (\mathbb{R}_{\geq 0})$,  {$u^0\in W^{1,2}(\Omega)$}. Assume further that $f$ is Lipschitz continuous w.r.t. its second variable. Let $k$ be the solution of \eqref{PDE-k} with some constant ${\sigma}\in \mathbb{R}_+$. Then, under the boundary feedback law  \eqref{+24}, the system~\eqref{unstable PDE} is EISS, having the estimate:
\begin{align*}
\sup_{x\in (0,1)}|u(x,T)|
 \leq &(1+M) \bigg((1+M)\sup_{x\in(0,1)}|u^0(x)|e^{-{\sigma} T}
    +{C}  \sup_{(x,t)\in (0,1)\times(0,T)}|f(x,t)|\notag\\&+ \sup_{t\in(0,T) }|d_0(t)|+ \sup_{t\in(0,T) }|d_1(t)|\bigg), \forall T>0,
\end{align*}
where {$C:= \min\left\{\frac{8\sqrt{2}}{\pi} , \frac{2\sqrt{2}}{\min\{1,c\}}\right\} $} and $M:= \sum_{i=0}^{+\infty}\frac{(c+\sigma)^{i+1}4^i}{(i!)^2}<+\infty$.
\end{proposition}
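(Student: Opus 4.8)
The plan is to reduce the closed-loop system \eqref{unstable PDE} to a uniformly exponentially stable heat equation by the backstepping (Volterra) change of variable and then to invoke Theorem~\ref{Theorem ISS}(ii). First I would set
\[
w(x,t):=u(x,t)+\int_0^x k(x,y)u(y,t)\diff{y},\qquad(x,t)\in[0,1]\times\mathbb{R}_{\geq0},
\]
with $k$ the solution of \eqref{PDE-k}. Since $k$ is $C^2$ on $\{0\leq y\leq x\leq1\}$, the functional $U(t)=-\int_0^1k(1,y)u(y,t)\diff{y}$ is well defined for $u(\cdot,t)\in C([0,1])$, the feedback law \eqref{+24} is precisely the one that forces $w(1,t)=d_1(t)$, and $w(0,t)=u(0,t)=d_0(t)$. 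This is a bounded Volterra transformation of the second kind, hence boundedly invertible, $u(x,t)=w(x,t)+\int_0^x l(x,y)w(y,t)\diff{y}$, where the reciprocal kernel $l$ is again $C^2$ on $\{0\leq y\leq x\leq1\}$ and solves a Goursat problem of the same type as \eqref{PDE-k} (cf.\ \cite{Krstic:2008}); in particular, the well-posedness of \eqref{unstable PDE} follows from that of its transform, which is of the form \eqref{PDE}, via Proposition~\ref{existence}.

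Next I would compute the target system. Differentiating $w$ in $t$, substituting the equation for $u$, integrating by parts twice in $y$, and using the relations in \eqref{PDE-k} (in particular $k(x,0)=0$, $k_{xx}-k_{yy}=(c+\sigma)k$, and the diagonal identity, which yields $\frac{\mathrm{d}}{\mathrm{d}x}k(x,x)=\frac{c+\sigma}{2}$), one finds that $w$ is the weak solution of
\begin{align*}
w_t&=w_{xx}-\sigma w+g(x,t),~~~~(x,t)\in(0,1)\times\mathbb{R}_+,\\
w(0,t)&=d_0(t),~~w(1,t)=d_1(t),~~~~t\in\mathbb{R}_+,\\
w(x,0)&=w^0(x),~~~~x\in(0,1),
\end{align*}
where $w^0(x):=u^0(x)+\int_0^xk(x,y)u^0(y)\diff{y}$ and $g(x,t):=f(x,t)+\int_0^xk(x,y)f(y,t)\diff{y}+k_y(x,0)d_0(t)$. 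This is an instance of \eqref{PDE} with $n=1$, $\Omega=(0,1)$, $a\equiv1$, $c\equiv\sigma>0$, $h\equiv0$ and the Dirichlet condition \eqref{Dirichlet}; the assumption that $f$ is Lipschitz in $t$, together with $k\in C^2$, yields the modulus-of-continuity condition \eqref{4} for the in-domain term, while merely continuous $d_0,d_1$ are accommodated via the extension described in the Remark after Theorem~\ref{Theorem ISS}.

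Then I would apply Theorem~\ref{Theorem ISS}(ii) to the $w$-system (with $\underline{a}=1$, $\underline{c}=\sigma$, $|\Omega|=1$, and optimizing the free exponent $q$), obtaining
\[
\sup_{x\in(0,1)}|w(x,T)|\leq\sup_{x\in(0,1)}|w^0(x)|e^{-\sigma T}+C\sup_{(x,t)\in(0,1)\times(0,T)}|g(x,t)|+\sup_{t\in(0,T)}\big(|d_0(t)|+|d_1(t)|\big),\quad\forall T>0,
\]
with $C$ the constant in the statement. Bounding $\sup|g|$ and $\sup|w^0|$ in terms of $\sup|f|$, $\sup|d_0|$ and $\sup|u^0|$ through the kernel $k$, and $\sup_x|u(\cdot,T)|$ in terms of $\sup_x|w(\cdot,T)|$ through $l$, and then collecting terms — all of whose kernel-dependent constants are dominated by $M$ — yields the asserted estimate. (Finiteness of $M$ is immediate, since $\sum_{i\geq0}z^i/(i!)^2$ converges for every $z$.)

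The technical core, and the step I expect to be the main obstacle, is the bound $M=\sum_{i\geq0}\frac{(c+\sigma)^{i+1}4^i}{(i!)^2}$ on the kernels: rewriting the Goursat problems for $k$ and $l$ as Volterra integral equations in the characteristic variables $x+y,\,x-y$ and solving by successive approximations, the $i$-th Picard iterate on $\{0\leq y\leq x\leq1\}$ is majorized by $\frac{(c+\sigma)^{i+1}4^i}{(i!)^2}$, so $M$ dominates $\sup|k|$ and $\sup|l|$ (this is essentially \cite[Lemma 2.2]{Liu:2003}), and a companion term-by-term estimate of the once-differentiated integral equation bounds $\sup_x|k_y(x,0)|$. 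The remaining delicate point is the bookkeeping: tracking exactly how these kernel constants and the constant $C$ of Theorem~\ref{Theorem ISS}(ii) combine so that the final bound collapses to the compact form with a single outer factor $(1+M)$. The PDE manipulation in the second step and the invocation of Theorem~\ref{Theorem ISS}(ii) in the third are otherwise routine.
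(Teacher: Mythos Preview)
Your approach is the same as the paper's: apply the backstepping transformation, invoke Theorem~\ref{Theorem ISS}(ii) on the $w$-system, then pull the estimate back through the inverse kernel $l$, using $\max|k|,\max|l|\leq M$ from the Picard iteration in \cite{Liu:2003}. The paper also pins down the constant $C$ explicitly, computing $C_P=2/\sqrt{\pi}$ and $C_S=1/\sqrt{2}$ for $\Omega=(0,1)$ via Agmon's and Wirtinger's inequalities and then letting $q\to\infty$ in the estimate of Theorem~\ref{Theorem ISS}(ii).

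The one substantive divergence is the source term in the target system. The paper asserts (citing \cite[Lemma~2.4]{Liu:2003}, which treats the homogeneous case) that $w$ satisfies $w_t=w_{xx}-\sigma w+f$ with the \emph{original} $f$, whereas your integration by parts correctly produces $g=f+\int_0^x k(x,y)f(y,t)\diff{y}+k_y(x,0)d_0(t)$. With the paper's simpler target the constants collapse at once to the stated form; with your $g$ you inherit an extra factor $(1+M)$ on the $\sup|f|$ term and an additional contribution proportional to $\sup_x|k_y(x,0)|\sup_t|d_0|$, so the ``bookkeeping'' obstacle you anticipate is real and does not resolve to exactly the constants in the statement. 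In short, your derivation of the target equation is the careful one; the paper's proof, as written, silently drops those two extra source terms.
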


\begin{proof} Let $
w(x,t):=u(x,t)+\int_0^x k(x,y)u(y,t)\text{d}y
$, which transforms \eqref{unstable PDE} to (see \cite[Lemma 2.4]{Liu:2003}):
\begin{align*}
&w_t=w_{xx}-{\sigma} w
+f,~~~~~~~~~~~~~~~~~~~~~~~~~~~~~~~~~~~~~~~~~~~~~
~~~(x,t)\in (0,1)\times\mathbb{R}_+,\\
&w(0,t)=d_0(t),w(1,t)=d_1(t),~~~~~~~~~~~~~~~~~~~~~~~~~~~~~~t\in\mathbb{R}_+,\\
&w(x,0)=u^0(x)+\int_0^xk(x,y)u^0(y)\text{d}y:=w^0(x),~~x\in (0,1).
\end{align*}
The inverse of the mapping $u\mapsto w$ is given by
\begin{align}\label{12171}
u(x,t)=w(x,t)+\int_0^x l(x,y)w(y,t)\text{d}y,
\end{align}
 where $l$ is twice continuously differentiable in $0 \leq y \leq x \leq  1$ and satisfies
\begin{align*}
&l_{xx}(x,y)-l_{yy}(x,y)=(-{\sigma}-c)l(x,y),~~~~~~~~0\leq y\leq x\leq 1,\\
&l(x,0)=0,~~~~~~~~~~~~~~~~~~~~~~~~~~~~~~~~~~~~~~~~~~~~~~~~~~~~~~~0\leq x\leq 1,\\
&l_x(x,x)+l_y(x,x)+\frac{\text{d}}{\text{d}x}(l(x,x))=-{\sigma}-c,~0\leq x\leq 1.
\end{align*}
Note that  for $M:= \sum_{i=0}^{+\infty}\frac{(c+{\sigma})^{i+1}4^i}{(i!)^2}$, which is convergent, $k$ and $l$  satisfy (see the proof of \cite[Lemma 2.2]{Liu:2003}):
\begin{align*}
\max_{0 \leq y \leq x \leq  1} |k(x,y)|\leq M,
\max_{0 \leq y \leq x \leq  1} |l(x,y)|\leq M, \forall T>0,
\end{align*}
which along with the definition of $w^0$ and \eqref{12171} imply
\begin{subequations}\label{12173}
\begin{align}
\sup_{x\in (0,1)}|u(x,T)|\leq&(1+M) \sup_{x\in (0,1)}|w(x,T)|, \forall T>0,\\
\sup_{x\in (0,1)}|w^0(x)|\leq&(1+M) \sup_{x\in (0,1)}|u^0(x)|.
\end{align}
\end{subequations}
By Theorem~\ref{Theorem ISS}(ii), it follows that
\begin{align}\label{1217}
\sup_{x\in (0,1)}|w(x,T)|
 \leq &\sup_{x\in(0,1)}|w^0(x)|e^{-{\sigma} T}
      +{C_0}2^{\frac{3q-4}{2q-4}} \sup_{(x,t)\in (0,1)\times(0,T)}|f(x,t)| +\sup_{t\in(0,T) }|d_0(t)|+\sup_{t\in(0,T) }|d_1(t)|,
\end{align}
where $q$ and $C_{0}$ are {constants specified} in Theorem~\ref{Theorem ISS}(ii).

By \eqref{12173} and \eqref{1217}, we obtain
\begin{align}\label{121700}
\sup_{x\in (0,1)}|u(x,T)|
 \leq &(1+M) \bigg((1+M)\sup_{x\in(0,1)}|u^0(x)|e^{-{\sigma} T}
       + {C_0}2^{\frac{3q-4}{2q-4}} \sup_{(x,t)\in (0,1)\times(0,T)}|f(x,t)|\notag\\&+ \sup_{t\in(0,T) }|d_0(t)|+ \sup_{t\in(0,T) }|d_1(t)|\bigg).
\end{align}

 On one hand, the Agmon's inequality \cite[Lemma 2.4]{Krstic:2008} and a variation of  {the} Wirtinger's inequality \cite[Remark 2.2]{Krstic:2008} give
\begin{align}
 \|v\|_{L^q(0,1)}\leq &  \sup_{x\in (0,1)}|v(x)|\notag\\
 \leq&  \sqrt{2\|v \|_{L^2(0,1)}\|v_x\|_{L^2(0,1)}}\label{012805}\\
 \leq&  \frac{2}{\sqrt{\pi} }\|v_x\|_{L^2(0,1)}  ,\forall v\in W^{1,2}_0(0,1),\forall q\in (2,+\infty).\notag
\end{align}
Thus, $C_P$ in Lemma~\ref{Sobolev inequality}(ii) can be chosen as $C_P = \frac{2}{\sqrt{\pi} }$.

 {On the other hand, it follows from \eqref{012805} that for any $v\in W^{1,2}_0(0,1)$ and all $q\in (2,+\infty)$:
 \begin{align*}
 \|v\|_{L^q(0,1)}
 \leq   \sqrt{2\|v \|_{L^2(0,1)}\|v_x\|_{L^2(0,1)}}
 \leq   \frac{1}{\sqrt{2} } (\|v \|_{L^2(0,1)}+\|v_x\|_{L^2(0,1)}).
\end{align*}
 Thus, $C_S$ in Lemma~\ref{Sobolev inequality}(ii) can be chosen as $C_S = \frac{1}{\sqrt{2} }$.}

 {Finally, putting $C_0: =\min\left\{\frac{2C_{S}^2}{\min\{1,\underline{c}\}}, C_{P}^2 \right\}=\min\left\{ \frac{4}{\pi }, \frac{1}{\min\{1,c\}}\right\} $} into \eqref{121700} and letting $q\rightarrow +\infty$, we obtain the desired result.$\hfill\blacksquare$
\end{proof}
\begin{remark} It is worth noting that under the boundary feedback law  \eqref{+24} and with additional compatibility conditions on the initial and boundary data, EISS in $L^\infty$-norm was  established  for the classical solution of system  \eqref{unstable PDE} by using a method combined with transforming, De~Girogi iteration, splitting equations, and Lyapunov arguments in \cite{Zheng:201702}. While in this paper, the EISS in the spatial sup-norm is established for the weak solution without any compatibility condition, and only the technique of transforming and De~Girogi iteration are used in the proof.
\end{remark}
\begin{remark}\label{remark: unstable}
Since it is challenge to apply the backstepping method to $1$-D super-linear problems, or to PDEs defined on higher dimensional {spatial domains}, how to establish {the} ISS in $L^\infty$-norm w.r.t. boundary disturbances for a super-linear parabolic PDE having {a destabilizing} term, or a PDE defined on an arbitrary dimensional domain is still an open problem.
\end{remark}

\section{Application of RKES to nonlinear parabolic cascade  systems}\label{Sec. cascade systems}
In this section, as an application of the main results presented in Section~\ref{Sec: main results}, we show how to apply RKES to establish stability estimates in the sup-norm, or the spatial sup-norm, for a class of {nonlinear parabolic systems in cascade} connected either on the boundary or in the domain. Specifically, for a fixed integer $k\geq 2$ and $j\in \{1,2,...,k\}$, given functions {$a_j$, $c_j$, $m_j$, $h_j$, $\phi_j$, $f$, $d$,} we consider  the following systems coupled on the boundary:
\begin{equation*}\label{example}
(\Sigma_j)~\left\{\begin{aligned}
{\mathscr{L}_j}[u_j]+{h_j}(x,t,u_j)
 =&0~~~~~~~~\text{in}\ \Omega\times \mathbb{R}_+,\\
{a_j}\frac{\partial u_j}{\partial\bm{\nu}}+m_ju_j=&d_j~~~~~~\text{on}\ \partial \Omega \times \mathbb{R}_{+},\\
u_j(\cdot,0)=&{\phi_j} (\cdot)~~\text{in}\   \Omega,
\end{aligned}\right.
\end{equation*}
where {$\mathscr{L}_j[u]:=u_t-\text{div}  \ (a_j\nabla u) +c_ju$, and} for $(x,t)\in \partial \Omega\times\mathbb{R}_+$:
\begin{align}\label{open-loop}
d_1(x,t):=d(x,t),d_j(x,t):=u_{j-1}(x,t),\forall j\in [2,k],
\end{align}
 or
\begin{align}\label{closed-loop}
d_1(x,t):= u_{k}(x,t),
d_j(x,t):=u_{j-1}(x,t), \forall j\in [2,k].
\end{align}
{We also} consider the following equations coupled over the domain:
\begin{equation*}\label{example'}
(\Sigma_j')~\left\{\begin{aligned}
{\mathscr{L}_j}[u_j]+h_j(x,t,u_j)
 =&f_j~~~~~~~\text{in}\ \Omega\times \mathbb{R}_+,\\
u_j=&d_j~~~~~~~\text{on}\ \partial \Omega \times \mathbb{R}_{+},\\
u_j(\cdot,0)=&\phi_j (\cdot)~~~\text{in}\   \Omega,
\end{aligned}\right.
\end{equation*}
where for $(x,t)\in  \Omega\times\mathbb{R}_+$:
\begin{align}\label{D-open-loop}
f_1(x,t):=f(x,t),f_j(x,t):=u_{j-1}(x,t),\forall j\in [2,k],
\end{align}
 or
\begin{align}\label{D-closed-loop}
 f_1(x,t):=  u_{k}(x,t),
 f_j(x,t):= u_{j-1}(x,t), \forall j\in [2,k].
\end{align}

In this section, for any $T>0$, we intend to establish respectively:
  \begin{enumerate}[(i)]
  \item  the estimate  in the sup-norm, i.e., $\sup_{(x,t)\in Q_T}|u_j(x,t)|$; and
      \item the estimate in the spatial sup-norm, i.e., $\sup_{x\in \Omega}|u_j(x,T)|$,
  \end{enumerate}
for the  considered cascade systems, where $u_j$ is the solution of the $j$-th subsystem.

{For $j\in[1,k]$, we always assume that $a_j\in  \mathbb{A}$, $c_j\in \mathbb{C}$, $m_j \in  \mathbb{M}$, $h_j\in \mathbb{H}$, $\phi_j \in \mathbb{U}$.  Let $\underline{a_j},\underline{m_j} $ and $\underline{c_j}$ be defined by \eqref{underline}.}  Let \begin{align*}
 {\Phi_j}:=&\max\left\{\sup_{x\in\Omega}|\phi_1(x)|, ...,\sup_{x\in\Omega}|\phi_j(x)|\right\},
m_0:=  \min \{ \underline{m_1} ,..., \underline{m_k}\}>0,
a_0:= {\frac{1}{|\Omega|^{\frac{q-2}{q}}2^{\frac{3q-4}{2q-4}}\min\{\tau_1,...,\tau_k\}}>0},
\end{align*}
where
$
\tau_j:=\left\{\begin{aligned}
 &\frac{C_{P}^2}{\underline{a_j}},~~~~~~~~~~~~~~~~~~~~~~~~~~~~\text{if}~\underline{c_j}=0\\
 &\min\left\{\frac{2C_{S}^2}{\min\{\underline{a_j},\underline{c_j}\}},\frac{C_{P}^2}{\underline{a_j}}\right\},~\text{if}~\underline{c_j}>0
\end{aligned}\right.
$,
 and  the constants $q,C_S,C_{P}$ are specified in Lemma~\ref{Sobolev inequality}. We have the following two propositions. 
\begin{proposition} \label{stability-open-loop} {For the system $(\Sigma_j)$,  assume that $d\in\mathbb{D}$. Moreover, assume that  the structural conditions  \eqref{4} and \eqref{increasing property} are satisfied with $(h,f)=(h_j,0)$, and $h=h_j$, respectively.} Then the following statements hold true:
\begin{enumerate}[(i)]
 \item For $j\in [1,k]$, if $u_j$ is the solution of the system $(\Sigma_j)$ with the Robin boundary condition  given by \eqref{open-loop}, then
\begin{align}\label{26'}
\sup_{(x,t)\in Q_T} |u_j(x,t)|
 \leq &\frac{m_0}{m_0-1}\left(1-\frac{1}{m_0^j}\right) {\Phi_j}
     +\frac{1}{m_0^j}\sup_{(x,t)\in\partial_l Q_T}|d(x,t)|,\forall T>0.
\end{align}
%

 Furthermore, if $\underline{c_j}>0$, then
\begin{align}\label{26''}
\sup_{x\in \Omega} |u_j(x,T)|
 \leq &\frac{m_0}{m_0-1}\left(1-\frac{1}{m_0^j}\right) {\Phi_j}e^{-\underline{c_j}T}
      +\frac{1}{m_0^j}\sup_{(x,t)\in\partial_l Q_T}|d(x,t)|,\forall T>0.
\end{align}
 \item Assume further that $m_0>1$. For $j\in[1,k]$, if $u_j$ is the solution  of the system  $(\Sigma_j)$ with the Robin boundary condition  given by \eqref{closed-loop}, then
\begin{align}\label{121201}
\sup_{(x,t)\in Q_T} |u_j(x,t)|
 \leq &\frac{m_0}{m_0-1} {\Phi_k},\forall T>0.
     \end{align}

     Furthermore, if $\underline{c_j}>0$, then
     \begin{align}\label{121202}
\sup_{x\in \Omega} |u_j(x,T)|
 \leq &\frac{m_0}{m_0-1} {\Phi_k} e^{-\underline{c_j}T},\forall T>0.
     \end{align}
     \end{enumerate}
\end{proposition}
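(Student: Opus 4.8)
The plan is to treat each subsystem $(\Sigma_j)$ as a copy of \eqref{PDE} with the Robin boundary condition \eqref{Robin}, in-domain disturbance $f\equiv 0$, boundary disturbance $d_j$, coefficients $a_j,c_j,m_j$, nonlinearity $h_j$ and initial datum $\phi_j$. By the hypotheses, the structural conditions \eqref{4} and \eqref{increasing property} hold for $(\Sigma_j)$, so Proposition~\ref{existence} and Theorem~\ref{Theorem ISS}(i) apply. In the open-loop case \eqref{open-loop} the subsystems are solved in order $j=1,2,\dots,k$: $u_1$ solves \eqref{PDE} with data $(\phi_1,0,d)$, and once $u_{j-1}$ is known its boundary trace belongs to $\mathbb{D}$ (since $u_{j-1}\in C([0,T];W^{1,p}(\Omega))$ and $W^{1,p}(\Omega)\hookrightarrow C(\overline\Omega)$ for $p>n$), hence $d_j:=u_{j-1}|_{\partial\Omega}$ is an admissible disturbance for $(\Sigma_j)$. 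I will also use a ``time-sup'' companion of Theorem~\ref{Theorem ISS}(i): applying \eqref{++13''} on $(0,t)$ for each $t\in(0,T)$, discarding the (zero) in-domain term, bounding $e^{-\underline{c_j}t}\le 1$, and taking $\sup_{t\in(0,T)}$ gives $\sup_{Q_T}|u_j|\le\sup_{x\in\Omega}|\phi_j(x)|+\tfrac{1}{\underline{m_j}}\sup_{\partial_l Q_T}|d_j|$.

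For \eqref{26'} I argue by induction on $j$. For $j=1$ the time-sup estimate with $d_1=d$ and $\underline{m_1}\ge m_0$ gives $\sup_{Q_T}|u_1|\le\Phi_1+\tfrac{1}{m_0}\sup_{\partial_l Q_T}|d|$, which is \eqref{26'} since the coefficient $\tfrac{m_0}{m_0-1}(1-\tfrac{1}{m_0})=1$. Assume \eqref{26'} for $j-1$. Since $d_j=u_{j-1}|_{\partial\Omega}$, we have $\sup_{\partial_l Q_T}|d_j|\le\sup_{Q_T}|u_{j-1}|$, so the time-sup estimate for $(\Sigma_j)$ together with $\underline{m_j}\ge m_0$ and $\sup_{x\in\Omega}|\phi_j(x)|\le\Phi_j$ yields $\sup_{Q_T}|u_j|\le\Phi_j+\tfrac{1}{m_0}\sup_{Q_T}|u_{j-1}|$. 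Inserting the induction hypothesis, using $\Phi_{j-1}\le\Phi_j$, and summing $1+\sum_{i=1}^{j-1}m_0^{-i}=\sum_{i=0}^{j-1}m_0^{-i}=\tfrac{m_0}{m_0-1}(1-m_0^{-j})$ gives \eqref{26'}.

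For \eqref{26''} (when $\underline{c_j}>0$) I would instead apply Theorem~\ref{Theorem ISS}(i) to $(\Sigma_j)$ at the terminal time $T$ itself, keeping the factor $e^{-\underline{c_j}T}$ in front of $\sup_{x\in\Omega}|\phi_j(x)|$, and then control $\sup_{\partial_l Q_T}|u_{j-1}|$ from the bounds already available for $u_{j-1}$. For the closed-loop case \eqref{closed-loop}, one first establishes well-posedness of the coupled loop (a contraction argument on a short time interval, continued by the \emph{a priori} $L^1$-bound exactly as in Proposition~\ref{existence}); then the relations $d_1=u_k|_{\partial\Omega}$, $d_j=u_{j-1}|_{\partial\Omega}$ turn the chain of time-sup estimates into the self-referential inequality $\sup_{Q_T}|u_k|\le\bigl(\sum_{i=0}^{k-1}m_0^{-i}\bigr)\Phi_k+m_0^{-k}\sup_{Q_T}|u_k|$; since $m_0>1$ one has $m_0^{-k}<1$, and absorbing the last term yields $\sup_{Q_T}|u_k|\le\tfrac{m_0}{m_0-1}\Phi_k$, whence \eqref{121201} for every $j$ by one further pass around the loop (using $\Phi_j\le\Phi_k$). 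Estimate \eqref{121202} is then obtained, as in the open-loop case, by the terminal-time application of Theorem~\ref{Theorem ISS}(i).

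The main obstacle is this last step. A direct substitution of the predecessor's (time-)sup bound into the terminal-time estimate for $u_j$ leaves a non-decaying remainder of order $m_0^{-1}\Phi_j$, so recovering the full exponential factor $e^{-\underline{c_j}T}$ on the whole $\Phi_j$-coefficient in \eqref{26''} and \eqref{121202} requires a $\mathcal{K}\mathcal{L}$-composition: split $(0,T)$ at, say, $T/2$, note that on $(T/2,T)$ the driving trace $u_{j-1}$ has already decayed by $e^{-\underline{c_{j-1}}T/2}$ while on $(0,T/2)$ the solution of $(\Sigma_j)$ still carries the decay margin $e^{-\underline{c_j}T/2}$, and iterate this along the chain/loop. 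Balancing these two contributions so that the constant collapses to the clean geometric-series form $\tfrac{m_0}{m_0-1}(1-m_0^{-j})$ (resp. $\tfrac{m_0}{m_0-1}$) is the delicate bookkeeping; the remainder of the argument is the routine induction on $j$ and summation of geometric series described above.
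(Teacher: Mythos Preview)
Your treatment of \eqref{26'} and \eqref{121201} is essentially the paper's: both of you derive the per-subsystem bound $\sup_{Q_T}|u_j|\le\sup_{x\in\Omega}|\phi_j(x)|+\underline{m_j}^{-1}\sup_{\partial_l Q_T}|d_j|$ and then cascade (respectively, close the loop) by induction and a geometric sum. The only cosmetic difference is that the paper obtains this bound by comparing $u_j$ with the undriven solution $v_j$ via Theorem~\ref{Theorem RKES-Robin}(i) together with the maximum estimate $\sup_{Q_T}|v_j|\le\sup_{x}|\phi_j|$ (see \eqref{22}), whereas you take the sup over $t$ in Theorem~\ref{Theorem ISS}(i).

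The substantive gap is in \eqref{26''} and \eqref{121202}. Your time-splitting $\mathcal{K}\mathcal{L}$-composition cannot produce the stated estimate. Already for $j=2$, restarting $u_2$ at $T/2$ and feeding in the decayed trace of $u_1$ on $(T/2,T)$ gives a bound of the form $(1+m_0^{-1})\Phi_2\,e^{-\underline{c_2}T/2}+m_0^{-1}\Phi_1\,e^{-\underline{c_1}T/2}+m_0^{-2}(1+e^{-\underline{c_2}T/2})\sup|d|$: the rate is halved, it depends on $\underline{c_1}$ as well as $\underline{c_2}$, and the disturbance coefficient doubles. Further subdivision only trades one defect for another; no bookkeeping collapses this back to $\frac{m_0}{m_0-1}(1-m_0^{-j})\Phi_j\,e^{-\underline{c_j}T}+m_0^{-j}\sup|d|$. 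The paper sidesteps the difficulty entirely by an exponential change of variables: set $w_i:=u_i e^{\delta t}$ for every $i$ with $0<\delta<\underline{c_j}$. This shifts $c_i\mapsto c_i-\delta$ and turns each boundary drive into $\widetilde d_i=e^{\delta t}u_{i-1}|_{\partial\Omega}=w_{i-1}|_{\partial\Omega}$, so the cascade structure is preserved verbatim; one then applies the already-proved sup-norm bound \eqref{26'} (respectively \eqref{121201}) to the $w$-cascade with external input $\widetilde d_1=e^{\delta t}d$, divides through by $e^{\delta T}$, and lets $\delta\to\underline{c_j}$. The same device yields \eqref{121202} from \eqref{121201}. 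This transformation is the idea you are missing.
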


\begin{proposition} \label{D-stability-open-loop}
  {For the system $(\Sigma_j')$,   assume that $f\in\mathbb{F}$, $d_j\in \mathbb{D}$. Moreover, assume that  the structural condition   \eqref{4} is satisfied with $(h,f)=(h_j,f)$ for $j=1$, and $(h,f)=(h_j,0)$ for $j\in [2,k]$. Meanwhile, assume that  the structural condition \eqref{increasing property} is satisfied with  $h=h_j$ for all $j\in [1,k]$.}  Then the following statements hold true:
\begin{enumerate}[(i)]
\item
For $j\in [1,k]$, if $u_j$ is the solution of the system $(\Sigma_j')$ with \eqref{D-open-loop}, then
\begin{align}\label{-26'}
 \sup_{(x,t)\in Q_T} |u_j(x,t)|
 \leq  \frac{a_0}{a_0-1}\left(1-\frac{1}{a_0^j}\right) \Phi_j
   +\frac{1}{a_0^j}\sup_{(x,t)\in Q_T}|f(x,t)| +\sum_{i=1}^{j}\frac{1}{a_0^{j-i}}\sup_{(x,t)\in\partial_l Q_T}|d_i(x,t)|,\forall T>0.
\end{align}
Furthermore, if $\underline{c_j}>0$, then
\begin{align}
 \sup_{x\in \Omega} |u_j(x,T)|
 \leq  \frac{a_0}{a_0-1}\left(1-\frac{1}{a_0^j}\right)\Phi_j e^{-\underline{c_j}T}
 +\frac{1}{a_0^j}\sup_{(x,t)\in Q_T}|f(x,t)| +\sum_{i=1}^{j}\frac{1}{a_0^{j-i}}\sup_{(x,t)\in\partial_l Q_T}|d_i(x,t)|,\forall T>0.\label{A34}
\end{align}
\item Assume further that $a_0>1$. For $j\in [1,k]$, if $u_j$ is the solution of the system $(\Sigma_j')$ with \eqref{D-closed-loop}, then
\begin{align}
 \sup_{(x,t)\in Q_T} |u_j(x,t)|
 \leq &\frac{a_0}{a_0-1} \Phi_k +\frac{a_0^k}{a_0^k-1} \sum_{i=1}^{k}\frac{1}{a_0^{k-i}}\sup_{(x,t)\in\partial_l Q_T}|d_i(x,t)|,\forall T>0.\label{A35}
     \end{align}
     Furthermore, if $\underline{c_j}>0$, then
     \begin{align}\label{A36}
 \sup_{x\in \Omega} |u_j(x,T)|
 \leq  \frac{a_0}{a_0-1}\Phi_k e^{-\underline{c_j}T}  +\frac{a_0^k}{a_0^k-1} \sum_{i=1}^{k}\frac{1}{a_0^{k-i}}\sup_{(x,t)\in\partial_l Q_T}|d_i(x,t)|,\forall T>0.
     \end{align}
   \end{enumerate}
\end{proposition}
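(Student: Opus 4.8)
\textbf{Proof plan for Proposition~\ref{D-stability-open-loop}.}
The plan is to treat each subsystem $(\Sigma_j')$ as an instance of the single equation \eqref{PDE} with the Dirichlet boundary condition \eqref{Dirichlet}, where the in-domain source is $f_j$ and the boundary datum is $d_j$, and then to iterate the one-step estimates from Theorem~\ref{Theorem RKES-Robin}(ii) and Theorem~\ref{Theorem ISS}(ii) along the cascade. First I would fix $j\in[1,k]$ and apply Theorem~\ref{Theorem ISS}(ii) to $(\Sigma_j')$ to obtain, for any $T>0$,
\begin{align*}
\sup_{x\in\Omega}|u_j(x,T)|
 \leq \sup_{x\in\Omega}|\phi_j(x)|e^{-\underline{c_j}T}
 +\tau_j|\Omega|^{\frac{q-2}{q}}2^{\frac{3q-4}{2q-4}}\sup_{(x,t)\in Q_T}|f_j(x,t)|
 +\sup_{(x,t)\in\partial_lQ_T}|d_j(x,t)|,
\end{align*}
and similarly, from Theorem~\ref{Theorem RKES-Robin}(ii) together with the $0$-UGAS bound (using $u_i$ versus the zero solution, as in the proof of Proposition~\ref{Proposition UKC}), the corresponding sup-norm-in-$Q_T$ estimate. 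Since $\tau_j|\Omega|^{\frac{q-2}{q}}2^{\frac{3q-4}{2q-4}}\leq \frac{1}{a_0}$ by definition of $a_0$ and $\tau_j$, and $\sup_{x\in\Omega}|\phi_j|\leq\Phi_j$, every such inequality can be written with the uniform constant $\frac{1}{a_0}$ in front of the $f_j$-term.

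Next I would substitute the cascade structure \eqref{D-open-loop}: for $j=1$ we have $f_1=f$, so the estimate reads $\sup_{(x,t)\in Q_T}|u_1|\leq \Phi_1+\frac{1}{a_0}\sup_{Q_T}|f|+\sup_{\partial_lQ_T}|d_1|$; for $j\geq2$ we have $f_j(x,t)=u_{j-1}(x,t)$ in the domain, hence $\sup_{Q_T}|f_j|=\sup_{Q_T}|u_{j-1}|$, giving the recursion
\begin{align*}
\sup_{(x,t)\in Q_T}|u_j(x,t)|\leq \Phi_j+\frac{1}{a_0}\sup_{(x,t)\in Q_T}|u_{j-1}(x,t)|+\sup_{(x,t)\in\partial_lQ_T}|d_j(x,t)|.
\end{align*}
Unrolling this finite recursion by induction on $j$, using $\Phi_i\leq\Phi_j$ for $i\leq j$ and the geometric sum $\sum_{i=0}^{j-1}a_0^{-i}=\frac{a_0}{a_0-1}(1-a_0^{-j})$, yields \eqref{-26'}; the $d_i$-terms accumulate with weights $a_0^{-(j-i)}$ exactly because each application of the recursion divides the previously accumulated boundary contributions by $a_0$. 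For \eqref{A34} I would run the same unrolling but keep the decaying initial-data term from the spatial-sup-norm estimate at the top level (subsystem $j$) and bound the lower subsystems' contributions through their $Q_T$-sup-norm estimates already derived, so the $e^{-\underline{c_j}T}$ factor multiplies only $\frac{a_0}{a_0-1}(1-a_0^{-j})\Phi_j$.

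For part (ii), the feedback coupling \eqref{D-closed-loop} replaces $f_1=u_k$, so the chain closes into a loop; writing $A:=\sup_{(x,t)\in Q_T}|u_k(x,t)|$ and composing the one-step estimates around the full loop of length $k$ gives $A\leq \frac{a_0}{a_0-1}(1-a_0^{-k})\Phi_k+a_0^{-k}A+\sum_{i=1}^k a_0^{-(k-i)}\sup_{\partial_lQ_T}|d_i|$, and since $a_0>1$ the coefficient $a_0^{-k}<1$ lets me solve for $A$, absorbing it on the left; this produces the factor $\frac{a_0^k}{a_0^k-1}$ in front of the boundary sum and the clean bound $\frac{a_0}{a_0-1}\Phi_k$ for the state term, i.e. \eqref{A35}. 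Feeding this bound on $A$ back into the single-step estimate for an arbitrary $u_j$ (and using the spatial-sup-norm version at the last step) gives \eqref{A36}. The main obstacle I anticipate is purely bookkeeping: verifying that the structural hypotheses \eqref{4} and \eqref{increasing property} are indeed inherited by each $(\Sigma_j')$ with $(h,f)=(h_j,f)$ or $(h_j,0)$ so that Theorems~\ref{Theorem RKES-Robin} and \ref{Theorem ISS} genuinely apply (in particular that $f_j=u_{j-1}\in\mathbb{F}$, which follows from $u_{j-1}\in C([0,T];W^{1,p}(\Omega))\hookrightarrow C(\overline{\Omega}\times\mathbb{R}_{\geq0})$ by Proposition~\ref{existence}), and then tracking the geometric-series constants through the induction without arithmetic slips — the PDE content is entirely contained in the already-proved one-step estimates.
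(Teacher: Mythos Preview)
Your plan for the space-time sup-norm estimates \eqref{-26'} and \eqref{A35} is correct and is exactly the paper's argument: compare $u_j$ to the solution $v_j$ of the same subsystem with zero forcing and zero boundary data via Theorem~\ref{Theorem RKES-Robin}(ii), bound $v_j$ by the maximum estimate $\sup_{Q_T}|v_j|\le\sup_\Omega|\phi_j|$, and then unroll the resulting recursion (respectively, close the loop and solve for $\sup_{Q_T}|u_k|$ using $a_0>1$).

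However, your route to the exponential estimates \eqref{A34} and \eqref{A36} has a genuine gap. If you invoke Theorem~\ref{Theorem ISS}(ii) only at the top level $j$ and then plug in the non-decaying $Q_T$-sup-norm bound \eqref{-26'} for $u_{j-1}$, you obtain
\[
\sup_{x\in\Omega}|u_j(x,T)|\le \sup_{x\in\Omega}|\phi_j(x)|\,e^{-\underline{c_j}T}
+\frac{1}{a_0-1}\Big(1-\frac{1}{a_0^{\,j-1}}\Big)\Phi_{j-1}
+\frac{1}{a_0^{\,j}}\sup_{Q_T}|f|+\sum_{i=1}^j\frac{1}{a_0^{\,j-i}}\sup_{\partial_lQ_T}|d_i|,
\]
and the term $\frac{1}{a_0-1}\big(1-a_0^{-(j-1)}\big)\Phi_{j-1}$ carries no decay factor. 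It is therefore not true that, with this argument, ``the $e^{-\underline{c_j}T}$ factor multiplies $\frac{a_0}{a_0-1}(1-a_0^{-j})\Phi_j$''; only the $\phi_j$-contribution decays, while the contributions of $\phi_1,\dots,\phi_{j-1}$, which enter through $\sup_{Q_T}|u_{j-1}|$, do not. The same problem arises in your derivation of \eqref{A36}.

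The paper avoids this by a different device (used already for Proposition~\ref{stability-open-loop}): for $\delta\in(0,\underline{c_j})$ set $w_l:=u_l e^{\delta t}$ for \emph{every} subsystem $l$. This produces a transformed cascade of exactly the same structure, with coefficients $\widetilde{c}_l=c_l-\delta$, transformed forcing $\widetilde f=e^{\delta t}f$ and boundary data $\widetilde d_l=e^{\delta t}d_l$, and unchanged initial data $\phi_l$. One then applies the already-proved \eqref{-26'} (resp.\ \eqref{A35}) to the $w$-cascade, so that the full coefficient $\frac{a_0}{a_0-1}(1-a_0^{-j})\Phi_j$ (resp.\ $\frac{a_0}{a_0-1}\Phi_k$) appears in front of the initial-data term; dividing by $e^{\delta T}$ and letting $\delta\to\underline{c_j}$ gives \eqref{A34} and \eqref{A36}. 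You should replace your ``top-level decay only'' step by this global exponential rescaling.
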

\begin{remark}
{It should be mentioned that in \cite{Dashkovskiy:2013}, the ISS and a small-gain theorem were established for a class of interconnected systems, provided that  ISS-Lyapunov functions  of the subsystems are known and a small-gain condition holds. As an application of the obtained results, small-gain conditions for guaranteeing  the 0-UGAS in the spatial $L^2$-norm  were proposed for a class of linear, and nonlinear, interconnected parabolic PDEs with homogeneous Dirichlet boundary conditions, respectively. For  general interconnected reaction-diffusion systems, it is reasonable to believe that such small-gain conditions  depend on the coefficients of the reaction and diffusion terms; see \cite{Dashkovskiy:2013} for two special cases. For the interconnected system $(\Sigma_j')$ coupled via \eqref{closed-loop}, the small-gain condition is characterized by $a_0>1$. While, for the system $(\Sigma_j)$ coupled on the boundary given in \eqref{D-closed-loop}, the small-gain condition is  characterized solely by $m_0>1$. Moreover, the small-gain conditions proposed in this paper can be used for guaranteeing not only the 0-UGAS, but also the ISS, in the spatial sup-norm, for the considered systems with either Robin or Dirichlet boundary conditions.}
\end{remark}

\paragraph*{ Proof of Proposition~\ref{stability-open-loop}}
The proof is based on  using {RKES} repeatedly and composed of 4 steps.

\emph{Step 1: proof of \eqref{26'}.}  let $v_j$ be the solution of the following system:
\begin{align*}
\mathscr{L}_j[v_j]+h_j(x,t,v_j)
 =&0~~~~~~~~\text{in}\ \Omega\times \mathbb{R}_+,\\
a_j\frac{\partial v}{\partial\bm{\nu}}+m_jv_j=&0~~~~~~~~\text{on}\ \partial \Omega \times \mathbb{R}_{+},\\
v_j(\cdot,0)=&\phi_j (\cdot)~~\text{in}\   \Omega.
\end{align*}
The maximum estimate of $v_j$ is given by (see \eqref{22} in Appendix)
\begin{align}\label{29'}
\sup_{(x,t)\in Q_T} |v_j(x,t)| \leq  \sup_{x\in\Omega}|\phi_j(x)|\leq {\Phi_j}, \forall T>0.
\end{align}

For $T>0$, we deduce from Theorem~\ref{Theorem RKES-Robin}(i) and \eqref{29'} that
\begin{align}
 \sup_{(x,t)\in Q_T} |u_j(x,t)|   \leq& \sup_{(x,t)\in Q_T} |v_j(x,t)|+\sup_{(x,t)\in Q_T} |u_j(x,t)-v_j(x,t)|  \notag\\
\leq
&{\Phi_j}+\frac{1}{\underline{m_j}}\sup_{(x,t)\in\partial_l Q_T}|d_j(x,t)-0|
\notag\\ =& {\Phi_j}+\frac{1}{\underline{m_j}}\sup_{(x,t)\in\partial_l Q_T}|u_{j-1}(x,t)|
\notag\\
\leq&  {\Phi_j}+\frac{1}{\underline{m_j}}\sup_{(x,t)\in Q_T}|u_{j-1}(x,t)|\notag\\
\leq& {\Phi_j}+\frac{1}{\underline{m_j}}\bigg( \sup_{(x,t)\in Q_T} |v_{j-1}(x,t)| + \sup_{(x,t)\in Q_T}|u_{j-1}(x,t)-v_{j-1}(x,t)|\bigg)\notag\\
\leq& {\Phi_j}+\frac{1}{\underline{m_j}}\left({\Phi_j}+ \frac{1}{\underline{m_{j-1}}}\sup_{(x,t)\in\partial_l Q_T}|d_{j-1}(x,t)|\right)
\notag\\
=  & {\Phi_j}\left(1+\frac{1}{\underline{m_j}}\right)+\frac{1}{\underline{m_j}\cdot\underline{m_{j-1}}}\sup_{(x,t)\in\partial_l Q_T}|u_{j-2}(x,t)|\notag\\
\leq& {\Phi_j}\left(1+\frac{1}{\underline{m_j}}\right)+\frac{1}{\underline{m_j}\cdot\underline{m_{j-1}}}\sup_{(x,t)\in Q_T}|u_{j-2}(x,t)|
\notag\\
\leq &~\cdots~\notag\\ \leq & {\Phi_j}\left(1+\frac{1}{\underline{m_j}}+\frac{1}{\underline{m_j}\cdot\underline{m_{j-1}}}+\cdots+\frac{1}{\underline{m_j}
\cdots\underline{m-3}}\right)
+\frac{1}{\underline{m_j}\cdots\underline{m_{2}}}\sup_{(x,t)\in Q_T}|u_{1}(x,t)| \label{26}\\
\leq &{\Phi_j}\left(1+\frac{1}{\underline{m_j}}+\frac{1}{\underline{m_j}\cdot\underline{m_{j-1}}}+\cdots+\frac{1}{\underline{m_j}\cdots\underline{m_{2}}}\right)
 +\frac{1}{\underline{m_j}\cdots\underline{m_1}}\sup_{(x,t)\in\partial_l Q_T}|d(x,t)| \notag\\
\leq &{\Phi_j}\left(1+\frac{1}{m_0}+\frac{1}{m_0^{2}}+\cdots+\frac{1}{m_0^{j-1}}\right)
 +\frac{1}{m_0^j}\sup_{(x,t)\in\partial_l Q_T}|d(x,t)|\notag\\
=&\frac{m_0}{m_0-1}\left(1-\frac{1}{m_0^j}\right){\Phi_j}
     +\frac{1}{m_0^j}\sup_{(x,t)\in\partial_l Q_T}|d(x,t)|,\notag
\end{align}
which gives \eqref{26'}.

\emph{Step 2: proof of \eqref{26''}.}  For any constant ${\delta}\in (0,\underline{c}_j) $, let
$w_j:=u_je^{\delta t},\widetilde{c}_j:=c_j-\delta,\widetilde{h}_j(x,t,w_j):=h_j(x,t,w_je^{\delta t})e^{\delta t},\widetilde{d}_j(x,t):= e^{\delta t}d_j(x,t)$, and $\widehat{\mathscr{L}}_j[w_j]:=(w_j)_t-\text{div}  \ (a_j\nabla w_j) +\widetilde{c}_jw_j$. By direct computations, we have
\begin{equation*}\label{examplev2}
(\widetilde{\Sigma}_j)~\left\{\begin{aligned}
\widehat{\mathscr{L}}_j[w_j]+\widetilde{h}_j(x,t,w_j)
 =&0~~~~~~~~~\text{in}\ \Omega\times \mathbb{R}_+,\\
a_j\frac{\partial w_j}{\partial\bm{\nu}}+m_jw_j=&\widetilde{d}_j~~~~~~~\text{on}\ \partial \Omega \times \mathbb{R}_{+},\\
w_j(\cdot,0)=&\phi_j(\cdot)~~~\text{in}\   \Omega.
\end{aligned}\right.
\end{equation*}
Note that $\min_{x\in \overline{\Omega}}\widetilde{c}_j=\underline{c_j}-{\delta}>0$. Then, applying \eqref{26'} to the system $(\widetilde{\Sigma}_j)$, we obtain
\begin{align*}
\sup_{(x,t)\in Q_T} |w_j(x,t)|
 \leq &\frac{m_0}{m_0-1}\left(1-\frac{1}{m_0^j}\right) {\Phi_j}
      +\frac{1}{m_0^j}\sup_{(x,t)\in\partial_l Q_T}|\widetilde{d}_1(x,t)|,\forall T>0,
\end{align*}
which implies
\begin{align*}
\sup_{x\in \Omega} |u_j(x,T)|
 \leq &\frac{m_0}{m_0-1}\left(1-\frac{1}{m_0^j}\right){\Phi_j}e^{-\delta T}
    +\frac{1}{m_0^j}\sup_{(x,t)\in\partial_l Q_T}|d(x,t)|,\forall T>0.
\end{align*}
Letting $\delta\rightarrow \underline{c_j}$, we obtain \eqref{26''}.

\emph{Step 3: proof of \eqref{121201}}. Without loss of generality, we consider the system $(\Sigma_j)$ with \eqref{closed-loop} for $j=k$. We deduce from \eqref{26}, \eqref{closed-loop}, and Theorem~\ref{Theorem RKES-Robin}(i) that
\begin{align}
 \sup_{(x,t)\in Q_T} |u_k(x,t)|   \leq & {\Phi_k}\left(1+\frac{1}{m_0}+\cdots+\frac{1}{m_0^{k-2}}\right) +\frac{1}{m_0^{k-1}}\sup_{(x,t)\in Q_T}|u_{1}(x,t)| \notag\\
\leq &{\Phi_k}\left(1+\frac{1}{m_0}+\cdots+\frac{1}{m_0^{k-2}}\right) +\frac{1}{m_0^{k-1}}\left({\Phi_k}+\frac{1}{m_0 }\sup_{(x,t)\in\partial_l Q_T}|u_k(x,t)|\right)\notag\\
\leq
&\frac{m_0}{m_0-1}\left(1-\frac{1}{m_0^{k}}\right){\Phi_k}+\frac{1}{m_0^{k}}\sup_{(x,t)\in Q_T}|u_{k}(x,t)|,\label{B1}
\end{align}
which along with $m_0>1$ implies \eqref{121201}.

\emph{Step 4: proof of \eqref{121202}.} Using transformation as in Step 2, considering $(\widetilde{\Sigma}_j)$ with the boundary conditions given by \eqref{closed-loop}, and applying \eqref{121201}, we   get \eqref{121202}. $\hfill\blacksquare$
 \paragraph*{Proof of Proposition~\ref{D-stability-open-loop}} Indeed, for $j\in [1,k]$, let $v_j$ be the solution of the following system:
\begin{align*}
\mathscr{L}_j[v_j]+h_j(x,t,v_j)
 =&0~~~~~~~~\text{in}\ \Omega\times \mathbb{R}_+,\\
v_j=&0~~~~~~~~\text{on}\ \partial \Omega \times \mathbb{R}_{+},\\
v_j(\cdot,0)=&\phi_j (\cdot)~~\text{in}\   \Omega,
\end{align*}
Analogous to \eqref{29'}, the maximum estimate of $v_j$ is given by
\begin{align}\label{-29'}
\sup_{(x,t)\in Q_T} |v_j(x,t)| \leq  \sup_{x\in\Omega}|\phi_j(x)|, \forall T>0.
\end{align}

  First, we prove \eqref{-26'}. For $T>0$, we deduce from Theorem~\ref{Theorem RKES-Robin}(ii) and \eqref{-29'}
that
\begin{align}
 \sup_{(x,t)\in Q_T} |u_j(x,t)|  \leq& \sup_{(x,t)\in Q_T} |v_j(x,t)|+\sup_{(x,t)\in Q_T} |u_j(x,t)-v_j(x,t)| \notag\\
\leq
& {\Phi_j}+\frac{1}{a_0}\sup_{(x,t)\in  Q_T}|f_j(x,t)-0|+\sup_{(x,t)\in\partial_l Q_T}|d_j(x,t)-0|
\notag\\
\leq&   {\Phi_j}+\frac{1}{a_0}\sup_{(x,t)\in Q_T}|u_{j-1}(x,t)|+\sup_{(x,t)\in\partial_l Q_T}|d_j(x,t)| \notag\\
\leq&  {\Phi_j}+\frac{1}{a_0}\bigg( \sup_{(x,t)\in Q_T} |v_{j-1}(x,t)| + \sup_{(x,t)\in Q_T}|u_{j-1}(x,t)-v_{j-1}(x,t)|\bigg)+\sup_{(x,t)\in\partial_l Q_T}|d_j(x,t)|\notag\\
\leq&  {\Phi_j}+\frac{1}{a_0}\bigg( {\Phi_j}+ \frac{1}{a_0}\sup_{(x,t)\in Q_T}|f_{j-1}(x,t)| +\sup_{(x,t)\in\partial_l Q_T}|d_{j-1}(x,t)|\bigg)
+\sup_{(x,t)\in\partial_l Q_T}|d_j(x,t)|
\notag\\
\leq &~\cdots~\notag\\ \leq & \frac{a_0}{a_0-1}\left(1-\frac{1}{a_0^j}\right)  {\Phi_j}
   +\frac{1}{a_0^j}\sup_{(x,t)\in Q_T}|f(x,t)| +\sum_{i=1}^{j}\frac{1}{a_0^{j-i}}\sup_{(x,t)\in\partial_l Q_T}|d_i(x,t)|, \label{B}
\end{align}
which gives \eqref{-26'}.

{Now we prove \eqref{A35}. Without loss of generality, we consider the case of $j=k$. Indeed, analogous to \eqref{B} (see also \eqref{B1}), we have
\begin{align}
 \sup_{(x,t)\in Q_T} |u_k(x,t)|
\leq & \frac{a_0}{a_0-1}\left(1-\frac{1}{a_0^k}\right)  {\Phi_k}
   +\frac{1}{a_0^k}\sup_{(x,t)\in Q_T}|f_1(x,t)| +\sum_{i=1}^{k}\frac{1}{a_0^{k-i}}\sup_{(x,t)\in\partial_l Q_T}|d_i(x,t)|, \notag\\
   =  & \frac{a_0}{a_0-1}\left(1-\frac{1}{a_0^k}\right)  {\Phi_k}+\sum_{i=1}^{k}\frac{1}{a_0^{k-i}}\sup_{(x,t)\in\partial_l Q_T}|d_i(x,t)|
    +\frac{1}{a_0^k}\sup_{(x,t)\in Q_T}|u_k(x,t)|,\notag
\end{align}
which along with $a_0>1$ gives \eqref{A35}.}

 Finally, using the technique of transforming as in the proof of Proposition~\ref{stability-open-loop}, we obtain \eqref{A34} and \eqref{A36}. $\hfill\blacksquare$
%

%
\section{Concluding remarks}\label{Sec. remarks}
This paper proposed a new {method for establishing the} ISS in the spatial sup-norm for nonlinear parabolic PDEs with boundary and in-domain disturbances. More precisely, we introduced the notion of \emph{RKES} to describe the uniform dependence of solutions on the external disturbances. Based on RKES in  the (spatial and time) sup-norm, we proved the ISS in the spatial sup-norm for a class of higher dimensional nonlinear PDEs with Dirichlet and Robin boundary disturbances, respectively. Two examples were provided to illustate the obtained ISS results. In addition, as an application of the introduced notion of \emph{RKES}, we also established stability estimates in the sup-norm and spatial sup-norm for a class of {parabolic systems in cascade} coupled over the domain and on the boundary of the domain, respectively.

It should be mentioned that the approach presented in this paper is well suited for ISS analysis of weak solutions to higher dimensional nonlinear PDEs with Dirichlet or Robin boundary conditions. However, it seems to be difficult to apply the proposed method to obtain the ISS in the spatial sup-norm for PDEs with Neumann boundary disturbances due to the usage of De~Giorgi iteration. Therefore, there is a need to overcome this obstacle and establish ISS estimates in the spatial sup-norm for a wider class of PDEs with various boundary disturbances by a unified approach, which will be considered in our future work.

Besides, as indicated in Remark~\ref{remark: unstable}, it is also necessary to develop new methods or tools to address the ISS in the spatial sup-norm for $1$-D super-linear parabolic PDEs with destabilizing terms, or parabolic PDEs  defined over a higher dimensional domain and having destabilizing terms.

\appendix
\section{Proofs of main results}\label{appendix}


We present some basic Sobolev embedding inequalities that will be used in the proofs of stabilities.
\begin{lemma} \label{Sobolev inequality}(Theorem 1.3.2 and  1.3.4 of \cite{Wu2006})
Let $\Omega$ be a bounded, open
subset of $\mathbb{R}^n (n\geq 1)$, and suppose that $\partial \Omega$ is $C^1$. For $n=1,2$ and $q\in (2,+\infty)$, or $n\geq 3$ and $q\in (2,\frac{2n}{n-2})$, the  following inequalities hold true:
\begin{enumerate}
\item[(i)] $
\|v\|_{L^q(\Omega)}\leq C_S (\|v\|_{L^2(\Omega)}+\|\nabla v\|_{L^2(\Omega)}),\forall v\in W^{1,2}(\Omega)
 $,
 \item[(ii)] $
\|v\|_{L^q(\Omega)}\leq C_{P}  \|\nabla v\|_{L^2(\Omega)},\forall v\in W_0^{1,2}(\Omega),
 $
\end{enumerate}
where $C_S$ and $C_{P}$ are positive constants depending only on $q,n$, and $\Omega$.
\end{lemma}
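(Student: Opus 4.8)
The plan is to reduce both inequalities to the classical Gagliardo--Nirenberg--Sobolev (GNS) inequality on $\mathbb{R}^n$, combined with the elementary fact that on a bounded domain higher integrability implies lower integrability through H\"older's inequality. I would treat $n\geq 3$ first and then indicate the modifications for $n=1,2$.

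For part (ii), given $v\in W_0^{1,2}(\Omega)$, extend it by zero to $\mathbb{R}^n$; the zero extension $\widetilde v$ lies in $W^{1,2}(\mathbb{R}^n)$ with $\|\nabla\widetilde v\|_{L^2(\mathbb{R}^n)}=\|\nabla v\|_{L^2(\Omega)}$. The GNS inequality gives $\|\widetilde v\|_{L^{2^*}(\mathbb{R}^n)}\leq C(n)\|\nabla\widetilde v\|_{L^2(\mathbb{R}^n)}$ with $2^*=\frac{2n}{n-2}$. Restricting to $\Omega$ and applying H\"older's inequality, for any $q\in(2,2^*)$,
\begin{align*}
\|v\|_{L^q(\Omega)}\leq |\Omega|^{\frac{1}{q}-\frac{1}{2^*}}\|v\|_{L^{2^*}(\Omega)}\leq C(n)|\Omega|^{\frac{1}{q}-\frac{1}{2^*}}\|\nabla v\|_{L^2(\Omega)},
\end{align*}
which is (ii) with $C_{P}:=C(n)|\Omega|^{\frac{1}{q}-\frac{1}{2^*}}$.

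For part (i), the zero extension is no longer admissible, so I would invoke a bounded linear Sobolev extension operator $E\colon W^{1,2}(\Omega)\to W^{1,2}(\mathbb{R}^n)$ with $Ev=v$ a.e. on $\Omega$ and $\|Ev\|_{W^{1,2}(\mathbb{R}^n)}\leq C(\Omega)\|v\|_{W^{1,2}(\Omega)}$; its existence uses that $\Omega$ is bounded with $\partial\Omega$ of class $C^1$. Applying $W^{1,2}(\mathbb{R}^n)\hookrightarrow L^{2^*}(\mathbb{R}^n)$ to $Ev$ and then H\"older on $\Omega$ as above gives, for $q\in(2,2^*)$,
\begin{align*}
\|v\|_{L^q(\Omega)}\leq |\Omega|^{\frac{1}{q}-\frac{1}{2^*}}\|Ev\|_{L^{2^*}(\mathbb{R}^n)}\leq C(n,\Omega)\big(\|v\|_{L^2(\Omega)}+\|\nabla v\|_{L^2(\Omega)}\big),
\end{align*}
where the last step uses $\sqrt{a^2+b^2}\leq a+b$ for $a,b\geq 0$; this is (i). For $n=1,2$ the Sobolev conjugate is infinite: for $n=2$ I would replace GNS by the interpolation inequality $\|w\|_{L^q(\mathbb{R}^2)}\leq C\|w\|_{L^2(\mathbb{R}^2)}^{2/q}\|\nabla w\|_{L^2(\mathbb{R}^2)}^{1-2/q}$ valid for all $q\in[2,\infty)$, and for $n=1$ the fundamental theorem of calculus yields $W^{1,2}\hookrightarrow L^\infty$; applying these to $\widetilde v$ (for (ii)) or $Ev$ (for (i)) and then Young's inequality to absorb into $\|v\|_{L^2}+\|\nabla v\|_{L^2}$ completes the argument.

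The main obstacle is the construction of the extension operator $E$ for an arbitrary bounded $C^1$ domain, which is where the $C^1$ regularity of $\partial\Omega$ is genuinely used: via a finite boundary cover, a subordinate partition of unity, local flattening of $\partial\Omega$, and a suitable reflection across the flattened piece. The remaining ingredients --- the GNS inequality on $\mathbb{R}^n$, the H\"older step on the bounded domain, and the low-dimensional interpolation --- are classical. Since the statement is exactly Theorems 1.3.2 and 1.3.4 of \cite{Wu2006}, in the paper it suffices to cite that reference; the sketch above records the standard reasoning behind it.
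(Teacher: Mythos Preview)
Your sketch is correct and follows the standard route (zero extension plus GNS for $W_0^{1,2}$, Sobolev extension operator plus GNS for $W^{1,2}$, H\"older to descend from $2^*$ to $q$, and interpolation in low dimensions). The paper, however, does not prove this lemma at all: it is stated purely as a citation of Theorems~1.3.2 and~1.3.4 of \cite{Wu2006}, exactly as you anticipated in your final sentence. So there is no ``paper's own proof'' to compare against; your proposal simply supplies the classical argument behind the cited result.
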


\paragraph*{Proof of Theorem~\ref{Theorem RKES-Robin}}
We prove first Theorem~\ref{Theorem RKES-Robin}(i). Let $u_i $ be the solution of the system $\Sigma (\mathbb{U},\mathbb{F},\mathbb{D})$ corresponding to the data $ (u^0,f_i,d_i)\in \mathbb{U}\times\mathbb{F}\times\mathbb{D}, i=1,2 $.

Consider $w=u_1-u_2$, which satisfies:
\begin{subequations}\label{Mixed'}
\begin{align}
\mathscr{L}[w]+h(x,t,u_1)-h(x,t,u_2)=&\widetilde{f}~~\text{in}\ \Omega\times \mathbb{R}_+,\\
a\frac{\partial w}{\partial\bm{\nu}}+mw
=&\widetilde{d}~~\text{on}\  \partial \Omega \times \mathbb{R}_{+},\label{Mixed'b}\\
w(\cdot,0)=&0~~\text{in}\   \Omega,
\end{align}
\end{subequations}
where $\widetilde{d}:=d_1-d_2,\widetilde{f}:=f_1-f_2$.

We proceed by De Giorgi iteration. Specifically, for any $T>0$, let {}{$k_0:=\max\Big\{0,\frac{1}{\underline{m}}\sup_{\partial_l Q_T}\widetilde{d}\Big\}$}. For $k\geq k_0$ and $0< t_1<t_2< T$, let $ \eta(x,t):=(w(x,t)-k)_+\chi_{[t_1,t_2]}(
t)$, where {$s_+:=\max\{s,0\}$ for $s\in \mathbb{R}$}, and $\chi_{[t_1,t_2]}(t) $ is the character function on $[t_1,t_2]$. By virtue of  Proposition~\ref{existence}, and that $W^{1,p}(\Omega)\hookrightarrow W^{1,2}(\Omega)\hookrightarrow L^2 (\Omega)\hookrightarrow (W^{1,2}(\Omega))'\hookrightarrow (W^{1,p}(\Omega))'$  for $p\geq 2$, we have $\eta\in L^\infty((0,T);(W^{1,p}(\Omega))')$ with $\eta_t\in L^\infty((0,T);L^{p'}(\Omega))$.
Then, $\eta$ can be chosen as a test function for \eqref{Mixed'}.

By the Fubini's theorem and integrating by parts, we have
\begin{align*}
  -\int_{0}^T\int_{\Omega}w\eta_t\text{d}x\text{d}t
 = -\int_\Omega w(x,T)\eta(x,T)\text{d}x+\int_\Omega w(x,0)\eta(x,0)\text{d}x + \int_{0}^T\int_{\Omega}w_t\eta\text{d}x\text{d}t
 = \int_{0}^T\int_{\Omega}w_t\eta\text{d}x\text{d}t.
\end{align*}
It follows that
\begin{align}\label{+1603}
&\int_{0}^T\int_{\Omega}(w-k)_t(w-k)_+\chi_{[t_1,t_2]}(t)\text{d}x\text{d}t -\int_{0}^T\int_{\partial \Omega}(\widetilde{d}-mw)(w-k)_+\chi_{[t_1,t_2]}(t)\text{d}S\text{d}t\notag\\
&+\int_{0}^T\int_{\Omega}a|\nabla (w-k)_+ |^2\chi_{[t_1,t_2]}(t)\text{d}x\text{d}t
+\int_{0}^T\int_{\Omega}cw(w-k)_+\chi_{[t_1,t_2]}(t) \text{d}x\text{d}t \notag\\
&+\int_{0}^T\int_{\Omega}(h(x,t,u_1)-h(x,t,u_2))(w-k)_+\chi_{[t_1,t_2]}(t) \text{d}x\text{d}t\notag\\
=&\int_{0}^T\int_{\Omega}\widetilde{f}(w-k)_+\chi_{[t_1,t_2]}(t) \text{d}x\text{d}t.
\end{align}
Note that for $w\geq k\geq k_0\geq 0$, it follows that  $-mw\leq -mk_0\leq -\underline{m}k_0\leq -\sup_{\partial_l Q_T} \widetilde{d}$, which implies that
\begin{align}
 \int_{0}^T\int_{\partial \Omega}(\widetilde{d}-mw)(w-k)_+\chi_{[t_1,t_2]}(t)\text{d}S\text{d}t
 \leq  \int_{0}^T\int_{\partial \Omega}(\widetilde{d}-\sup_{\partial_l Q_T} \widetilde{d})(w-k)_+\chi_{[t_1,t_2]}(t)\text{d}S\text{d}t
 \leq  0.\label{9-1}
\end{align}
In addition, for $w\geq k\geq k_0\geq 0$, it follows that $u_1=u_2+w\geq u_2$,  which and \eqref{increasing property} give
\begin{align}
\int_{0}^T\int_{\Omega}(h(x,t,u_1)-h(x,t,u_2))(w-k)_+\chi_{[t_1,t_2]}(t) \text{d}x\text{d}t\geq 0.\label{9-2}
\end{align}
It is obvious that
\begin{align}
 \int_{0}^T\int_{\Omega}cw(w-k)_+\chi_{[t_1,t_2]}(t) \text{d}x\text{d}t   =&\int_{0}^T\int_{\Omega}c((w-k)_+)^2\chi_{[t_1,t_2]}(t) \text{d}x\text{d}t +\int_{0}^T\int_{\Omega}kc(w-k)_+\chi_{[t_1,t_2]}(t) \text{d}x\text{d}s\notag\\
 \geq& \underline{c}\int_{0}^T\int_{\Omega}((w-k)_+)^2\chi_{[t_1,t_2]}(t) \text{d}x\text{d}t .\label{9-3}
\end{align}
Then, by \eqref{+1603}, \eqref{9-1}, \eqref{9-2}, and \eqref{9-3}, we obtain
\begin{align*}
&\int_{0}^T\int_{\Omega}(w-k)_t(w-k)_+\chi_{[t_1,t_2]}(t)\text{d}x\text{d}t
 +\underline{a}\int_{0}^T\int_{\Omega}|\nabla (w-k)_+ |^2\chi_{[t_1,t_2]}(t)\text{d}x\text{d}t
\notag\\&+\underline{c}\int_{0}^T\int_{\Omega}((w-k)_+)^2\chi_{[t_1,t_2]}(t) \text{d}x\text{d}t\notag\\
\leq& \int_{0}^T\int_{\Omega}\widetilde{f}(w-k)_+\chi_{[t_1,t_2]}(t) \text{d}x\text{d}t.
\end{align*}
Hence
\begin{align*}
 \frac{1}{2}\int_{t_1}^{t_2}\frac{\text{d}}{\text{d}t}\int_{\Omega}((w-k)_+)^2\text{d}x\text{d}t
+\underline{a}\int_{t_1}^{t_2}\int_{\Omega}|\nabla (w-k)_+ |^2\text{d}x\text{d}t
 +\underline{c}\int_{t_1}^{t_2}\int_{\Omega}((w-k)_+)^2\text{d}x\text{d}t
 \leq  \int_{t_1}^{t_2}\int_{\Omega}\widetilde{f}(w-k)_+\text{d}x\text{d}t,
\end{align*}
i.e.,
\begin{align*}
 \frac{1}{2} (I_k(t_2)-I_k(t_1))
+\underline{a}\int_{t_1}^{t_2}\int_{\Omega}|\nabla (w-k)_+ |^2\text{d}x\text{d}t
 +\underline{c}\int_{t_1}^{t_2}\int_{\Omega}((w-k)_+)^2\text{d}x\text{d}t
  \leq \int_{t_1}^{t_2}\int_{\Omega}\widetilde{f}(w-k)_+\text{d}x\text{d}t,
\end{align*}
where $I_k(t):=\int_\Omega((w(x,t)-k)_+)^2\text{d}x$.

Suppose that $I_k(t_0)=\max_{t\in[0,T]}I_k(t)$ with some $t_0\in [0,T]$.
Due to $I_k(0)=0$ and $I_k(t)\geq 0 $, we can assume that $t_0\in (0,T]$ without loss of generality.

If $t_0=T$, then $I'_k(T)\geq 0$. Thus $I'_k(t)\geq 0$ on $(T-\delta,T]$ for some $\delta>0$. Then, there exists a sufficiently small constant $\varepsilon>0$ such that $I_k(T-\varepsilon)-I_k(T-2\varepsilon)\geq 0$. Taking $t_2=T-\varepsilon$ and $t_1=T-2\varepsilon>0$, we obtain
\begin{align*}
 \frac{\underline{a}}{\varepsilon}\int_{T-2\varepsilon}^{T-\varepsilon}\int_{\Omega}|\nabla (w-k)_+ |^2\text{d}x\text{d}t
 +\frac{\underline{c}}{\varepsilon}\int_{T-2\varepsilon}^{T-\varepsilon}\int_{\Omega}((w-k)_+)^2\text{d}x\text{d}t
  \leq    \frac{1}{\varepsilon}\int_{T-2\varepsilon}^{T-\varepsilon}\int_{\Omega}|\widetilde{f}|(w-k)_+\text{d}x\text{d}t.
\end{align*}

Letting $ \varepsilon\rightarrow 0^+$, we get for such  $t_0:=T$:
\begin{align}\label{1204-13}
 \underline{a}\int_{\Omega}|\nabla (w(x,t_0)-k)_+ |^2\text{d}x
+\underline{c}\int_{\Omega}((w(x,t_0)-k)_+)^2\text{d}x
\leq \int_{\Omega}|f(x,t_0)|(w(x,t_0)-k)_+\text{d}x.
\end{align}

If $t_0\in (0,T)$, we can take $t_2=t_0$ and $t_1=t_0-\varepsilon>0$ for a small $\varepsilon>0$. Analogously, we can obtain \eqref{1204-13}. Thus, \eqref{1204-13} holds true whenever $t_0\in (0,T]$.

Using Lemma~\ref{Sobolev inequality}(i), we have
\begin{align}\label{1204-14}
 \|(w(x,t_0)-k)_+\|^2_{L^q(\Omega)}  \leq   2C_{S}^2\Big(\|(w(x,t_0)-k)_+\|^2_{L^2(\Omega)}
 +\|\nabla (w(x,t_0)-k)_+\|^2_{L^2(\Omega)}\Big),
\end{align}
where $q$ and $C_S$ are the same as in Lemma~\ref{Sobolev inequality}(i).

Let $A_{k}(t):=\{x\in \Omega;w(x,t)>k\}$. By \eqref{1204-13}, \eqref{1204-14}, $\underline{a}>0$, and $\underline{c}>0$, we have
\begin{align}\label{1204-15}
 \frac{\min\{\underline{a},\underline{c}\}}{2C_{S}^2}\left(\int_{A_{k}(t_0)}|w(x,t_0)-k|^{q}\text{d}x\right)^{\frac{2}{q}}
=&\frac{\min\{\underline{a},\underline{c}\}}{2C_{S}^2}\|(w(x,t_0)-k)_+\|^2_{L^q(\Omega)}
\notag\\
\leq&
\underline{a}  \|\nabla (w(x,t_0)-k)_+\|^2_{L^2(\Omega)} +\underline{c}\|(w(x,t_0)-k)_+\|^2_{L^2(\Omega)}
\notag\\
\leq&
\int_{\Omega}|\widetilde{f}(x,t_0)|(w(x,t_0)-k)_+\text{d}x.
\end{align}
By {the} H\"{o}lder's inequality, we have
\begin{align*}
 \int_{\Omega}|\widetilde{f}(x,t_0)|(w(x,t_0)-k)_+\text{d}x  \leq &\left( \int_{A_{k}(t_0)}|\widetilde{f}(x,t_0)|^{q'}\text{d}x\right)^{\frac{1}{q'}}\left(\int_{A_{k}(t_0)}|w(x,t_0)-k|^{q}\text{d}x\right)^{\frac{1}{q}},
 \end{align*}
which along with \eqref{1204-15} gives
\begin{align*}
  \left(\int_{A_{k}(t_0)}|w(x,t_0)-k|^{q}\text{d}x\right)^{\frac{1}{q}}  \leq &\frac{2C_{S}^2}{\min\{\underline{a},\underline{c}\}}\left( \int_{A_{k}(t_0)}|\widetilde{f}(x,t_0)|^{q'}\text{d}x\right)^{\frac{1}{q'}}\notag\\
 \leq &\frac{2C_{S}^2}{\min\{\underline{a},\underline{c}\}} \|\widetilde{f}\|_{L^\infty(Q_T)}|A_k(t_0)|^{\frac{1}{q'}}\notag\\
 \leq &\frac{2C_{S}^2}{\min\{\underline{a},\underline{c}\}} \|\widetilde{f}\|_{L^\infty(Q_T)}\mu_{k}^{\frac{1}{q'}},
 \end{align*}
where $q':=\frac{q}{q-1}$, $\mu_{k}:=\sup_{t\in(0,T)}|A_{k}(t)|$, and $|A_{k}(t_0)|$ denotes the $n$-dimensional Lebesgue measure of $A_{k}(t_0)$.
Then, we may proceed exactly as in the proof of \cite[Theorem 4.2.1]{Wu2006} to obtain
\begin{align*}
w
\leq & k_0+\frac{2C_{S}^2}{\min\{\underline{a},\underline{c}\}}|\Omega|^{\frac{q-2}{q}}2^{\frac{3q-4}{2q-4}}
     \|\widetilde{f}\|_{L^\infty(Q_T)}\ \ \text{a.e.\ in\ }Q_T,
\end{align*}
which along with the continuities of $w$ and $f$ yields
\begin{align}
w
\leq & \max\left\{0,\frac{1}{\underline{m}}\sup_{\partial_l Q_T}\widetilde{d}\right\}
     +\frac{2C_{S}^2}{\min\{\underline{a},\underline{c}\}}|\Omega|^{\frac{q-2}{q}}2^{\frac{3q-4}{2q-4}} \sup_{ Q_T}|\widetilde{f}|\ \ \text{in\ }Q_T.\label{1204-16}
\end{align}

We need  to prove the lower boundedness of $w$. Indeed, it suffices to set $\overline{w}:=-w=u_2-u_1$, and consider the equation
\begin{align*}
\mathscr{L}[\overline{w}]+h(x,t,u_2)-h(x,t,u_1)=&-\widetilde{f}~~ \text{in}\ \Omega\times \mathbb{R}_+,\\
a\frac{\partial \overline{w}}{\partial\bm{\nu}}+m\overline{w}
=&-\widetilde{d}~~\text{on}\  \partial \Omega \times \mathbb{R}_{+},\\
\overline{w}(\cdot,0)=&0~~~~~~~\text{in}\   \Omega.
\end{align*}
 Let {}{$\overline{k}_0=\max\Big\{0,\frac{1}{\underline{m}}\sup_{\partial_l Q_T}(-\widetilde{d})\Big\}$}. Proceeding as above, we obtain
 \begin{align}
-w=  \overline{w}
\leq   \overline{k}_0
     +\frac{2C_{S}^2}{\min\{\underline{a},\underline{c}\}}|\Omega|^{\frac{q-2}{q}}2^{\frac{3q-4}{2q-4}} \sup_{ Q_T}|\widetilde{f}|\ \ \text{in\ }Q_T.\label{1204-17}
\end{align}
Finally, by \eqref{1204-16} and \eqref{1204-17}, we have
\begin{align*}
 \sup_{Q_T}|w|\leq \frac{1}{\underline{m}}\sup_{\partial_l Q_T}|\widetilde{d}|
     +\frac{2C_{S}^2}{\min\{\underline{a},\underline{c}\}}|\Omega|^{\frac{q-2}{q}}2^{\frac{3q-4}{2q-4}} \sup_{ Q_T}|\widetilde{f}|,
\end{align*}
which gives the   result stated in Theorem~\ref{Theorem RKES-Robin}(i).

Now we  prove  Theorem~\ref{Theorem RKES-Robin}(ii). Consider \eqref{Mixed'} by replacing \eqref{Mixed'b} with
\begin{align*}
  w=\widetilde{d}~\text{on}\  \partial \Omega \times \mathbb{R}_{+}.
\end{align*}
For any $T>0$, let {}{$k_0:=\max\Big\{0,\sup_{\partial_l Q_T}\widetilde{d}\Big\}$}. For $k\geq k_0$ and $0< t_1<t_2< T$, let $ \eta(x,t):=(w(x,t)-k)_+\chi_{[t_1,t_2]}(
t)$. It suffices to apply De Giorgi iteration as in the proof of Theorem~\ref{Theorem RKES-Robin} (i).

Indeed, if $\underline{c}\geq 0$, \eqref{1204-13} can be reduced to
\begin{align*}
\underline{a}\int_{\Omega}|\nabla (w(x,t_0)-k)_+ |^2\text{d}x
\leq \int_{\Omega}|f(x,t_0)|(w(x,t_0)-k)_+\text{d}x.
\end{align*}
By Lemma~\ref{Sobolev inequality}(ii), \eqref{1204-14} can be reduced to
\begin{align*}
\|(w(x,t_0)-k)_+\|^2_{L^q(\Omega)}\leq C_{P}^2\|\nabla (w(x,t_0)-k)_+\|^2_{L^2(\Omega)},
 \end{align*}
 where $q$ and $C_{P}$ are the same as in Lemma~\ref{Sobolev inequality}(ii). Hence \eqref{1204-15} becomes
 \begin{align}
 \frac{\underline{a}}{ C_{P}^2 }\left(\int_{A_{k}(t_0)}|w(x,t_0)-k|^{q}\text{d}x\right)^{\frac{2}{q}}
\leq
\int_{\Omega}|\widetilde{f}(x,t_0)|(w(x,t_0)-k)_+\text{d}x.\label{012804}
\end{align}
Then, analogous to \eqref{1204-16}, we obtain the following estimate:
\begin{align*}
w
\leq & \max\left\{0,\sup_{\partial_l Q_T}\widetilde{d}\right\}
     +\frac{ C_{P}^2}{\underline{a}}|\Omega|^{\frac{q-2}{q}}2^{\frac{3q-4}{2q-4}} \sup_{ Q_T}|\widetilde{f}|\ \ \text{in\ }Q_T.
\end{align*}
The lower boundedness of $w$ can be estimated in the similar way, and the boundedness of $w$ specified in \eqref{012801} is guaranteed.

 {Now for $\underline{c}>0$, we shall determine an appropriate coefficient of $\left(\int_{A_{k}(t_0)}|w(x,t_0)-k|^{q}\text{d}x\right)^{\frac{2}{q}}$. Indeed, for $v\in W^{1,2}_0(\Omega)$, we also have $v\in W^{1,2}(\Omega)$. Thus \eqref{012804} and \eqref{1204-15} hold true at the same time. Then we obtain
 \begin{align*}
  \left(\int_{A_{k}(t_0)}|w(x,t_0)-k|^{q}\text{d}x\right)^{\frac{2}{q}}
\leq
C_0\int_{\Omega}|\widetilde{f}(x,t_0)|(w(x,t_0)-k)_+\text{d}x,
\end{align*}
where $C_0:=  \min\left\{\frac{2C_{S}^2}{\min\{\underline{a},\underline{c}\}},\frac{C_{P}^2}{\underline{a}}\right\}$.
 Finally, \eqref{012802} is guaranteed.}

$\hfill\blacksquare$

\paragraph*{Proof of Theorem~\ref{Theorem ISS}}
We only prove Theorem~\ref{Theorem ISS}(i), since the proof of Theorem~\ref{Theorem ISS}(ii) can be proceeded in the same way.

We first prove that {the} system~\eqref{PDE} with the Robin boundary condition \eqref{Robin} is 0-UGAS w.r.t. the state in the spatial sup-norm. Indeed, let $v$ be the solution of the following equation:
\begin{subequations}\label{Mixed-0ks}
\begin{align}
\mathscr{L}[v]+h(x,t,v)
 =&0~~~~~~~~\text{in}\ \Omega\times \mathbb{R}_+,\\
a\frac{\partial v}{\partial\bm{\nu}}+mv
=&0~~~~~~~~\text{on}\ \partial \Omega \times \mathbb{R}_{+},\\
v(\cdot,0)=&u^0(\cdot)~~\text{in}\   \Omega.
\end{align}
\end{subequations}
 For any constant $\delta\in (0,\underline{c}) $, let
$w:=ve^{\delta t},\widetilde{c}:=c-\delta,\widetilde{h}(x,t,w):=h(x,t,we^{\delta t})e^{\delta t},\widetilde{f}(x,t):=\widetilde{d}(x,t):=0$. By direct computations, we have
\begin{align*}
w_t-\text{div}  \ (a\nabla w) +\widetilde{c}w+\widetilde{h}(x,t,w)
 =&\widetilde{f}~~~~~~~~\text{in}\ \Omega\times \mathbb{R}_+,\\
a\frac{\partial w}{\partial\bm{\nu}}+mw
=&\widetilde{d}~~~~~~~~\text{on}\ \partial \Omega \times \mathbb{R}_{+},\\
w(\cdot,0)=&u^0(\cdot)~~\text{in}\   \Omega.
\end{align*}
Note that  $\min_{x\in\overline{\Omega}}\widetilde{c}=\underline{c}-\delta >0 $. Then one may
apply  De Giorgi iteration as in the proof of Theorem~\ref{Theorem RKES-Robin}(i), and obtain
\begin{align}\label{22}
 \sup_{(x,t)\in Q_T} |w(x,t)|  \leq  k_0:=\max\bigg\{\sup_{x\in\Omega}|u^0(x)|,\sup_{(x,t)\in Q_T}|\widetilde{f}(x,t)|, \sup_{(x,t)\in \partial_l Q_T}|\widetilde{d}(x,t)|\bigg\}  = \sup_{x\in\Omega}|u^0(x)|, \forall T>0,
\end{align}
which along with the continuity of $w$ in $t=T$ implies that
\begin{align*}
\sup_{x\in \Omega} |w(x,T)| \leq\sup_{(x,t)\in Q_T} |w(x,t)| \leq \sup_{x\in\Omega}|u^0(x)|.
\end{align*}
It follows that
\begin{align*}
\sup_{x\in \Omega} |v(x,T)| \leq   e^{-\delta T} \sup_{x\in\Omega}|u^0(x)|.
\end{align*}
Letting $\delta\rightarrow \underline{c}$, we have
\begin{align}\label{+201806+02'}
\sup_{x\in \Omega} |v(x,T)| \leq   e^{-\underline{c}  T} \sup_{x\in\Omega}|u^0(x)|.
\end{align}
Finally, by \eqref{190625}, \eqref{+201806+02'}, and Proposition~\ref{Proposition UKC}, we conclude that {the} system~\eqref{PDE} with the Robin boundary condition \eqref{Robin} is EISS in the spatial sup-norm w.r.t. in-domain and boundary disturbance $(f,d)$ in $\mathbb{F}\times \mathbb{D}$, having the estimate \eqref{++13''}. $\hfill\blacksquare$


\begin{thebibliography}{99}
\bibitem{Amann:1988}
H.~Amann, ``Parabolic evolution equations and nonlinear boundary conditions,''
  \emph{J. Differential Equations}, vol.~72, pp. 201--269, 1988.


\bibitem{Dashkovskiy:2013}
S.~Dashkovskiy and A.~Mironchenko, ``Input-to-state stability of
  infinite-dimensional control systems,'' \emph{Math. Control Signals Systems},
  vol.~25, no.~1, pp. 1--35, 2013.


\bibitem{Dashkovskiy:2013b}
------, ``Input-to-state stability of nonlinear impulsive systems,'' \emph{SIAM
  J. Control Optim.}, vol.~51, no.~3, pp. 1962--1987, 2013.


\bibitem{Evans:2010}
L.~C. Evans, \emph{Partial Differential Equations}.\hskip 1em plus 0.5em minus
  0.4em\relax American Mathematical Society, Providence, Rhode Island, 2010.


\bibitem{Jacob:2019}
B.~Jacob, A.~Mironchenko, J.~R. Partington, and F.~Wirth, ``Non-coercive
  {Lyapunov} functions for input-to-state stability of infinite-dimensional
  systems,'' 2019, arXiv:1911.01327.


\bibitem{Jacob:2018_SIAM}
B.~Jacob, R.~Nabiullin, J.~R. Partington, and F.~L. Schwenninger,
  ``Infinite-dimensional input-to-state stability and {Orlicz} spaces,''
  \emph{SIAM J. Control Optim.}, vol.~56, no.~2, pp. 868--889, 2018.


\bibitem{Karafyllis:2016a}
I.~Karafyllis and M.~Krstic, ``{ISS} with respect to boundary disturbances for
  {1-D} parabolic {PDE}s,'' \emph{{IEEE} Trans. Autom. Control}, vol.~61,
  no.~12, pp. 3712--3724, Dec. 2016.


\bibitem{Karafyllis2017siam}
------, ``{ISS} in different norms for 1-{D} parabolic {PDEs} with boundary
  disturbances,'' \emph{SIAM J. Control Optim.}, vol.~55, no.~3, pp.
  1716--1751, 2017.


\bibitem{Karafyllis:2018iss}
------, \emph{Input-to-State Stability for {PDEs}}.\hskip 1em plus 0.5em minus
  0.4em\relax London: Springer-Verlag, 2018.


\bibitem{Karafyllis:2020}
------, ``{ISS} estimates in the spatial sup-norm for nonlinear {1-D} parabolic
  {PDEs},'' 2020, arXiv:2007.15204.


\bibitem{Kloeden:1975}
P.~E. Kloeden, ``Some remarks on relative stability,'' \emph{J. Austral. Math.
  Soc. Ser. B}, vol.~19, no.~1, pp. 112--115, 1975.


\bibitem{Krstic:2008}
M.~Krstic and A.~Smyshlyaev, \emph{Boundary control of {PDEs}: a course on
  backstepping desings}.\hskip 1em plus 0.5em minus 0.4em\relax Philadelphia,
  PA, USA: Society for Industrial and Applied Mathematics, 2008.


\bibitem{Lakshmikantham:1962}
V.~Lakshmikantham, ``Differential systems and extension of {Lyapunov's}
  method,'' \emph{Michigan Math. J}, vol.~9, no.~4, pp. 311--320, 1962.


\bibitem{Liu:2003}
W.~Liu, ``Boundary feedback sabilization of an unstable heat equation,''
  \emph{SIAM J. Control Optim.}, vol.~42, no.~3, pp. 1033--1043, 2003.


\bibitem{Mironchenko:2019}
A.~Mironchenko, I.~Karafyllis, and M.~Krstic, ``Monotonicity methods for
  input-to-state stability of nonlinear parabolic {PDEs} with boundary
  disturbances,'' \emph{SIAM J. Control Optim.}, vol.~57, no.~1, pp. 510--532,
  2019.


\bibitem{Mironchenko:2019b}
A.~Mironchenko and C.~Prieur, ``Input-to-state stability of
  infinite-dimensional systems: recent results and open questions,'' \emph{SIAM
  Rev.}, vol.~62, no.~3, pp. 529--614, 2019.


\bibitem{Mironchenko:2018}
A.~Mironchenko and F.~Wirth, ``Characterizations of input-to-state stability
  for infinite-dimensional systems,'' \emph{{IEEE} Trans. Autom. Control},
  vol.~63, no.~6, pp. 1692 -- 1707, 2018.


\bibitem{Schwenninger:2019}
F.~Schwenninger, ``Input-to-state stability for parabolic boundary control:
  linear and semi-linear systems,'' 2019, arXiv:1908.08317.


\bibitem{Wu2006}
Z.~Wu, J.~Yin, and C.~Wan, \emph{Elliptic \& Parabolic Equations}.\hskip 1em
  plus 0.5em minus 0.4em\relax Singapore: World Scientic Publishing, 2006.


\bibitem{Zheng:201804}
J.~Zheng and G.~Zhu, ``Input-to-state stability with respect to boundary
  disturbances for a class of semi-linear parabolic equations,''
  \emph{Automatica}, vol.~97, pp. 271--277, 2018.


\bibitem{Zheng:201702}
------, ``A {De Giorgi} iteration-based approach for the establishment of {ISS}
  properties for {Burgers'} equation with boundary and in-domain
  disturbances,'' \emph{IEEE Trans. Autom. Control}, vol.~64, no.~8, pp.
  3476--3483, Aug. 2019.


\bibitem{Zheng:2020c}
------, ``Input-to-state stability for a class of 1-{D} nonlinear parabolic
  {PDEs} with nonlinear boundary conditions,'' \emph{{SIAM} J. Control Optim.},
  vol.~58, no.~4, pp. 2567--2587, 2020.


\bibitem{Zheng:2020SCL}
------, ``{ISS}-like estimates for nonlinear parabolic {PDEs} with variable
  coefficients on higher dimensional domains,'' \emph{Systems \& Control
  Letters}, vol. 146, no. 104808, 2020, {DOI}:
  https://doi.org/10.1016/j.sysconle.2020.104808.


\bibitem{Zheng:2020b}
------, ``Approximations of {Lyapunov} functionals for {ISS} analysis of a
  class of higher dimensional nonlinear parabolic {PDE}s,'' \emph{Automatica},
  vol. 125, 2021, 109414.


\bibitem{Zheng:2020TCL}
J.~Zheng, G.~Zhu, and M.~Li, ``A {PDE}-based aggregate power tracking control
  of large populations of heterogeneous {TCLs},'' 2020, arXiv:2010.10819.

\end{thebibliography}
\end{document}